\DeclareMathOperator*{\argmin}{argmin}
\DeclareMathOperator*{\rank}{rank}
\DeclareMathOperator*{\tr}{tr}
\newtheorem{assumption}[theorem]{Assumption}
\begin{document}
\title{A Block Coordinate Descent Method for Sensor Network Localization}

\author{Mitsuhiro Nishijima\and Kazuhide Nakata}
\institute{Mitsuhiro Nishijima \at Department of Industrial Engineering and Economics, Tokyo Institute of Technology, Ookayama, Meguro-ku, Tokyo 152-8550, Japan \\
 \email{nishijima.m.ae@m.titech.ac.jp} 
 \and
 Kazuhide Nakata \at Department of Industrial Engineering and Economics, Tokyo Institute of Technology, Ookayama, Meguro-ku, Tokyo 152-8550, Japan \\
 \email{nakata.k.ac@m.titech.ac.jp}}
\date{Received: date / Accepted: date} 
\maketitle
\begin{abstract}
The problem of sensor network localization (SNL) can be formulated as a semidefinite programming problem with a rank constraint. We propose a new method for solving such SNL problems. We factorize a semidefinite matrix with the rank constraint into a product of two matrices via the Burer--Monteiro factorization. Then, we add the difference of the two matrices, with a penalty parameter, to the objective function, thereby reformulating SNL as an unconstrained multiconvex optimization problem, to which we apply the block coordinate descent method. In this paper, we also provide theoretical analyses of the proposed method and show that each subproblem that is solved sequentially by the block coordinate descent method can also be solved analytically, with the sequence generated by our proposed algorithm converging to a stationary point of the objective function. We also give a range of the penalty parameter for which the two matrices used in the factorization agree at any accumulation point. Numerical experiments confirm that the proposed method does inherit the rank constraint and that it estimates sensor positions faster than other methods without sacrificing the estimation accuracy, especially when the measured distances contain errors.
\keywords{Sensor network localization \and Block coordinate descent\and Rank constraint \and Burer--Monteiro factorization \and Multiconvex optimization}
\end{abstract}

\section{Preliminaries}
\label{sec:introduction}
\subsection{Introduction}
\label{subsec:background}
Sensor network localization (SNL) is the problem of estimating the unknown positions of $m$ sensors from the known positions of $n$ anchors and the measured distances (which may contain measurement errors) between sensor--sensor or sensor--anchor pairs. In this problem, let $E_{\mathrm{ss}}$ and $E_{\mathrm{sa}}$ be the sets of sensor--sensor and sensor--anchor pairs, respectively, with known measured distances, and let $d_{ij}$ and $d_{ik}$ be the measured distances for each $ij \coloneqq \{i,j\}\in E_{\mathrm{ss}}$ and $ik\coloneqq \{i,k\} \in E_{\mathrm{sa}}$, respectively. Let the anchor coordinates be $\bm{a}_k= (a_{k1},\dots,a_{kd})^\top\in\mathbb{R}^d \ (k = m + 1,\dots, m + n)$. Then, the SNL problem is formulated as the following system of equations with variables $\bm{x}_i\in\mathbb{R}^d \ (i = 1,\dots, m)$:
\begin{equation}
\|\bm{x}_i-\bm{x}_j\|_2 = d_{ij}\ (\forall ij \in E_{\mathrm{ss}}),\quad\|\bm{x}_i-\bm{a}_k\|_2 = d_{ik}\ (\forall ik \in E_{\mathrm{sa}}). \label{prob:original}
\end{equation}
When a matrix variable $Z$ is introduced, finding the $\bm{x}_1,\dots,\bm{x}_m$ satisfying system~(\ref{prob:original}) is known to be equivalent to finding a solution of the following semidefinite programming (SDP) problem with a rank constraint \cite{Biswas2004,Wan2019}:
\begin{equation}
\vline\quad\begin{aligned}
&\text{min}&&0\\
&\text{s.t.}&&A_{ij}\bullet Z = d_{ij}^2\quad(\forall ij\in E_{\mathrm{ss}}),\\
&&&A_{ik}\bullet Z = d_{ik}^2\quad(\forall ik\in E_{\mathrm{sa}}),\\
&&&Z_{(1:d,1:d)} = I_d,\\
&&&\rank(Z) \le d,\\
&&&Z \in \mathcal{S}_+^{d+m}.
\end{aligned}
\label{prob:rankSDP_noerror}
\end{equation}
Here, for each $ij\in E_{\rm ss}$ and $ik \in E_{\rm sa}$,
\begin{align*}
A_{ij}&\coloneqq\begin{pmatrix}
\bm{0}_d\\
\bm{e}_i-\bm{e}_j
\end{pmatrix}\begin{pmatrix}
\bm{0}_d\\
\bm{e}_i-\bm{e}_j
\end{pmatrix}^\top,\ \quad A_{ik} \coloneqq \begin{pmatrix}
\bm{a}_k\\
-\bm{e}_i
\end{pmatrix}\begin{pmatrix}
\bm{a}_k\\
-\bm{e}_i
\end{pmatrix}^\top,
\end{align*}
where $\bm{e}_1,\dots,\bm{e}_m$ is the canonical basis of $\mathbb{R}^m$. See Subsection~\ref{subsec:notation} for the definitions of the other symbols. When $d_{ij}$ and $d_{ik}$ contain measurement errors, problem~(\ref{prob:rankSDP_noerror}) generally does not have a solution. Therefore, to account for the case in which $d_{ij}$ and $d_{ik}$ contain errors, researchers have often considered problem~(\ref{prob:rankSDP_error}) defined below \cite{Biswas2004,Chang2017,Chang2014,Chang2016,Nie2009,Tseng2007}. In this problem, the distance constraints are incorporated in the objective function in the form of quadratic errors, and a penalty is imposed for violating those constraints. In this paper, we also seek to estimate sensor positions in SNL by solving this problem:
\begin{equation}
\vline\quad\begin{aligned}
&\text{min}&&\frac{1}{2}\sum_{ij\in E_{\mathrm{ss}}}(A_{ij}\bullet Z-d_{ij}^2)^2 + \frac{1}{2}\sum_{ik\in E_{\mathrm{sa}}}(
A_{ik}\bullet Z-d_{ik}^2)^2\\
&\text{s.t.}&&Z_{(1:d,1:d)}=I_d,\\
&&&\rank(Z) \le d,\\
&&&Z\in \mathcal{S}_+^{d+m}.
\end{aligned} \label{prob:rankSDP_error}
\end{equation}

The SNL problem is generally known to be NP-hard \cite{Aspnes2004}, and the formulations of the SNL problem as optimization problems (\ref{prob:rankSDP_noerror}) and (\ref{prob:rankSDP_error}) are also nonconvex. This nonconvexity is due to the rank constraint that appears in problems (\ref{prob:rankSDP_noerror}) and (\ref{prob:rankSDP_error}).
Therefore, many previous SNL studies removed the rank constraint and relaxed the problem into an SDP problem to estimate approximate sensor positions \cite{Biswas2006c,Biswas2006b,Biswas2004,Biswas2006,Liang2004,Wang2008}.
Among these methods, sparse full SDP (SFSDP) as proposed by Kim \textit{et al.} \cite{Kim2009} is an especially representative one. However, a solution of the SDP relaxation problem is not always a solution of the original problem. So and Ye \cite{So2007} referred to problem~(\ref{prob:original}), which has a unique solution and does not have sensor positions satisfying all of the given distances in a higher-dimensional space, as ``uniquely localizable." They proved that problem~(\ref{prob:original}) is uniquely localizable if and only if the maximum rank of the solutions of the SDP relaxation problem~(\ref{prob:rankSDP_noerror}) is $d$. Therefore, when the SDP problem is solved by the interior-point method, which is a representative method for solving general SDP problems, if the problem is uniquely localizable, then the exact sensor positions can be determined by solving the SDP relaxation problem. Otherwise, an optimal solution of the SDP relaxation problem corresponds to a configuration of the sensors in a higher-dimensional space because of the max-rank property of the interior-point method \cite{Goldfarb1998}, and it thus might give poorly estimated sensor positions in $d$-dimensional space.

On the other hand, the exact sensor positions in $d$-dimensional space can be estimated only if problem~(\ref{prob:original}) has a unique solution (Wan \textit{et al.} \cite{Wan2019} referred to a problem satisfying this condition as ``locatable"). Therefore, methods have recently emerged for estimating the sensor positions by solving problem (\ref{prob:rankSDP_noerror}) or (\ref{prob:rankSDP_error}) itself. Wan \textit{et al.} \cite{Wan2019} proposed a method that obtains a solution of problem~(\ref{prob:rankSDP_noerror}) by solving SDP problems multiple times. This approach is based on the fact that the rank of a matrix being less than or equal to $d$ is equivalent to its $(d+1)$th and subsequent eigenvalues all being zero. Numerical experiments showed that their method's estimation accuracy was better than that of an SDP relaxation-based method. However, their method took more time than the latter method, because it requires solving the SDP problem via an SDP solver at each iteration. Wan \textit{et al.} \cite{Wan2020} also proposed a method that transforms the SDP problem with the rank constraint into an SDP problem with a complementarity constraint and alternately performs minimization with regard to the two semidefinite matrices that appear in the problem. Numerical experiments also confirmed that this method was more accurate than SDP relaxation methods such as SFSDP. However, as in \cite{Wan2019}, it took too long to estimate the sensor positions.

Another method for solving general SDP problems is the Burer--Monteiro factorization, in which a semidefinite matrix $Z$ is factorized into the form $VV^\top$ and a nonconvex optimization problem is solved after the factorization \cite{Burer2003}. If the number of columns of $V$ is chosen as $r$ in this factorization, then it introduces the constraint that the rank must be less than or equal to $r$. Therefore, this method is suitable for obtaining low-rank solutions of SDP problems. In a series of studies, Chang and colleagues \cite{Chang2017,Chang2014,Chang2016} attempted to estimate sensor positions by using the Burer--Monteiro factorization. First, Chang and Xue \cite{Chang2014} proposed a method that applies the limited-memory Broyden--Fletcher--Goldfarb--Shanno method to the problem after the Burer--Monteiro factorization. However, they set the number of columns of $V$ used in the factorization to that of the semidefinite matrix before the factorization, so their method does not consider the rank constraint. Second, Chang \textit{et al.} \cite{Chang2016} used the same method as in \cite{Chang2014} to estimate sensor positions in three-dimensional space, but unlike in \cite{Chang2014}, they set the number of columns of $V$ to three, so we can say that this method takes the rank constraint into account. They compared it with SFSDP through numerical experiments and reported that the sensor positions could be estimated more quickly and with the same level of accuracy as SFSDP. However, those experiments involved only small problems with up to 200 sensors. Finally, Chang and Liu \cite{Chang2017} proposed a method that they called NLP-FD, which solves the optimization problem obtained from the Burer--Monteiro factorization by the curvilinear search algorithm \cite{Wen2013}. Their numerical experiments showed the superiority of NLP-FD over SFSDP when a problem is large in scale and the measured distances include errors.

In this paper, we propose a new method for SNL that accounts for the rank constraint. First, we factorize $Z$ in problem~(\ref{prob:rankSDP_error}) into a product of two matrices through the Burer--Monteiro factorization:
\begin{equation*}
Z = \begin{pmatrix}
I_d\\
U^\top
\end{pmatrix}\begin{pmatrix}
I_d\\
V^\top
\end{pmatrix}^\top.
\end{equation*}
This factorization is equivalent under the constraint $U - V =\bm{O}$. Therefore, problem~(\ref{prob:rankSDP_error}) can be transformed into an unconstrained multiconvex optimization problem by adding the difference between the two matrices, with a penalty parameter $\gamma$, to the objective function. Then, the block coordinate descent method can be applied to the new objective function, and optimization can be performed sequentially for each column of $U$ and $V$. We formalize this procedure as Algorithm~\ref{alg:node-based}.

We also analyze the proposed method theoretically. First, we show that each subproblem in Algorithm~\ref{alg:node-based} is an unconstrained convex quadratic optimization problem and can be solved analytically (Theorem~\ref{thm:analytic2}). Second, we show that any accumulation point of the sequence generated by Algorithm~\ref{alg:node-based} is a stationary point of the objective function (Theorem~\ref{thm:stationary2}). Third, we give a range of $\gamma$ for which the two matrices $U$ and $V$ used in the factorization coincide at any accumulation point (Theorem~\ref{thm:gamma}). Finally, we explain the relationship between the objective function in the reformulated problem and the augmented Lagrangian. Numerical experiments confirm that the proposed method does inherit the rank constraint; furthermore, the results demonstrate not only that our method estimates sensor positions faster than SFSDP and NLP-FD without sacrificing estimation accuracy, especially when the measured distances include errors, but also that our method does not run out of memory even for large-scale SNL problems.

The rest of this paper is organized as follows. In Section~\ref{sec:algorithm}, we present the proposed method and analyze it theoretically. In Section~\ref{sec:experiment}, we compare it with the other methods to confirm its effectiveness. Finally, in Section~\ref{sec:conclusion}, we present our conclusions and suggest possible future work.

\subsection{Notation}
\label{subsec:notation}
\begin{itemize}
\item $\mathbb{N}$ denotes the set of natural numbers without zero. $\mathbb{R}^{p}$ denotes the set of $p$-dimensional real vectors, and $\bm{0}_p$ denotes the zero vector of $\mathbb{R}^p$. When the size is clear from the context, we omit the size subscript at the lower right. $\mathbb{R}^{p\times q}$ denotes the set of $p\times q$ real matrices. Let $I_p$ be the identity matrix of $\mathbb{R}^{p\times p}$ and $\bm{O}$ be the zero matrix of an appropriate size. $\mathcal{S}_+^p$ denotes the set of $p\times p$ symmetric positive semidefinite matrices.
\item For $\bm{x} \in \mathbb{R}^p$, $\|\bm{x}\|_2$ denotes the $2$-norm of $\bm{x}$. For $A,\ B\in \mathbb{R}^{p\times q}$, $A\bullet B$ means the inner product between $A$ and $B$, denoted by $\tr(A^\top B)$; $\|A\|_F$ denotes the Frobenius norm of $A$; $A_{(i:j,k:l)}$ denotes the submatrix of $A$ obtained by choosing the $\{i,\dots,j\}$th rows of $A$ and the $\{k,\dots,l\}$th columns of $A$; and $\rank(A)$ denotes the rank of $A$. For any symmetric matrix $A$, $\lambda_{\mathrm{max}}(A)$ denotes its maximum eigenvalue.
\item For $ i=1,\dots,m$, $E_{\mathrm{ss}}[i]$ and $E_{\mathrm{sa}}[i]$ denote the sets of sensors and anchors, respectively, that are connected directly to sensor $i$.
\end{itemize}

\section{Proposed method and analyses}
\label{sec:algorithm}
\subsection{Proposed method}
\label{subsec:algorithm}
Problem~(\ref{prob:rankSDP_error}) is an SDP problem with a rank constraint and is difficult to solve directly. In this subsection, we propose a new method that transforms problem~(\ref{prob:rankSDP_error}) into an unconstrained multiconvex optimization problem and solves the latter problem sequentially to estimate sensor positions. First, a matrix $Z$ satisfies the three constraints in problem~(\ref{prob:rankSDP_error}) if and only if it can be factorized into the product of two matrices as follows:
\begin{equation*}
Z = \begin{pmatrix}
I_d& U\\
U^\top& U^\top U
\end{pmatrix} = \begin{pmatrix}
I_d\\
U^\top
\end{pmatrix}\begin{pmatrix}
I_d\\
V^\top
\end{pmatrix}^\top,\ U-V=\bm{O}. \label{eq:Burer-Monteiro2}
\end{equation*}
Thus, problem~(\ref{prob:rankSDP_error}) is equivalent to
\begin{equation}
\vline\quad\begin{aligned}
&\text{min}&&f(U,V) \coloneqq \frac{1}{2}\sum_{ij\in E_{\mathrm{ss}}}\left(A_{ij}\bullet \begin{pmatrix}
I_d\\
U^\top
\end{pmatrix}\begin{pmatrix}
I_d\\
V^\top
\end{pmatrix}^\top-d_{ij}^2\right)^2 \\
&&&+ \frac{1}{2}\sum_{ik\in E_{\mathrm{sa}}}\left(A_{ik}\bullet\begin{pmatrix}
I_d\\
U^\top
\end{pmatrix}\begin{pmatrix}
I_d\\
V^\top
\end{pmatrix}^\top-d_{ik}^2\right)^2\\
&\text{s.t.}&&U - V =\bm{O}.\\
\end{aligned} \label{prob:rankSDP_error3}
\end{equation}
To make problem~(\ref{prob:rankSDP_error3}) easier to solve, we remove the constraint $U - V=\bm{O}$ and add a quadratic penalty term $\gamma/2\|U-V\|_F$ with a penalty parameter $\gamma\ (>0)$ to the objective function; as a result, the objective function takes larger values as the constraint $U - V=\bm{O}$ is more strongly violated. In other words, we adopt the following unconstrained optimization problem:
\begin{equation}
\vline\quad
\begin{aligned}
&\text{min}&&F(U,V;\gamma) \coloneqq \frac{\gamma}{2}\|U-V\|_F^2 + f(U,V). \label{prob:biconvex2}
\end{aligned}
\end{equation}

In the proposed algorithm, we let $U = (\bm{u}_1,\dots,\bm{u}_m)$ and $V = (\bm{v}_1,\dots,\bm{v}_m)$ and then perform minimization with regard to $\bm{u}_1,\dots,\bm{u}_m,\bm{v}_1,\dots,\bm{v}_m$, i.e., each column of $U$ and $V$ sequentially. Specifically, the procedure is as listed in Algorithm~\ref{alg:node-based}.
\begin{algorithm}
\caption{Proposed algorithm for problem~(\ref{prob:biconvex2})}
\label{alg:node-based}
\begin{algorithmic}[1]
\Require an initial point $U^{(0)}=(\bm{u}_1^{(0)},\dots,\bm{u}_m^{(0)}),\ V^{(0)}=(\bm{v}_1^{(0)},\dots,\bm{v}_m^{(0)})\in\mathbb{R}^{d\times m}$,\ a penalty parameter $\gamma$, a parameter $\epsilon$
\Ensure a generated sequence $\{(U^{(p)}$,$V^{(p)})\}$
\While 1
\For {$i = 1,\dots,m$}
\State $\bm{u}_i^{(p)} = \argmin_{\bm{u}_i}F(\bm{u}_1^{(p)},\dots,\bm{u}_{i-1}^{(p)},\bm{u}_i,\bm{u}_{i+1}^{(p-1)},\dots,\bm{u}_m^{(p-1)},V^{(p-1)};\gamma)$.
\EndFor
\For {$i = 1,\dots,m$}
\State $\bm{v}_i^{(p)} = \argmin_{\bm{v}_i}F(U^{(p)},\bm{v}_1^{(p)},\dots,\bm{v}_{i-1}^{(p)},\bm{v}_i,\bm{v}_{i+1}^{(p-1)},\dots,\bm{v}_m^{(p-1)};\gamma)$.
\EndFor
\If{$\max\left\{\frac{2\|U^{(p)}-V^{(p)}\|_F}{\|U^{(p)}\|_F+\|V^{(p)}\|_F},\frac{\|U^{(p)}-U^{(p-1)}\|_F}{\|U^{(p-1)}\|_F},\frac{\|V^{(p)}-V^{(p-1)}\|_F}{\|V^{(p-1)}\|_F}\right\} < \epsilon$}
\State stop algorithm.
\Else
\State $p = p + 1$
\EndIf
\EndWhile
\end{algorithmic}
\end{algorithm}

The proposed method has the following advantages over other methods:
\begin{enumerate}[(i)]
\item The SDP problem with the rank constrained~(\ref{prob:rankSDP_error}) is equivalent to problem~(\ref{prob:rankSDP_error3}), from which we obtained problem~(\ref{prob:biconvex2}) by incorporating $U-V=\bm{O}$ in the objective function as a quadratic penalty term with a penalty parameter $\gamma$. As we will see from Theorem~\ref{thm:gamma}, if $\gamma$ is larger than a real-valued threshold, then the $U^{(p)}$ and $V^{(p)}$ generated by Algorithm~\ref{alg:node-based} coincide with each other at any accumulation point, thereby satisfying the constraint of problem~(\ref{prob:rankSDP_error3}). Therefore, the proposed method inherits the rank constraint in problem~(\ref{prob:rankSDP_error}) and retains the potential capability to estimate sensor positions accurately for problems that are not uniquely localizable. This advantage will be verified in Subsection~\ref{subsec:uniquely localizable}. \label{adv:1}
\item As we will see from Theorem~\ref{thm:analytic2}, each subproblem appearing inside a \textbf{for} statement in Algorithm~\ref{alg:node-based} is an unconstrained convex quadratic optimization problem. The solution of each subproblem can be obtained analytically, because the solution process can be reduced to solving a system of linear equations with an invertible coefficient matrix of size $d$. Because $d$ is at most three in real situations, the system can be solved rapidly and without running out of memory, regardless of the number of sensors $m$. Moreover, the subproblems only need to be solved $2m$ times (i.e., a number proportional to $m$) for each outer loop. Therefore, especially in the case of large-scale SNL problems, we expect faster estimates of the sensor positions as compared with other methods. This advantage will be verified in Subsection~\ref{subsec:2dim}. \label{adv:2}
\end{enumerate}

\subsection{Analyses of the proposed method}
\label{subsec:analysis}
In this subsection, we present theoretical analyses of problem~(\ref{prob:biconvex2}) and Algorithm~\ref{alg:node-based}. First, we impose an assumption about the problem that we are examining.
\begin{assumption}\label{asm:100}
All sensors are connected to an anchor either directly or indirectly.
\end{assumption}
The same assumption was also made in \cite{Chang2017,So2007,Wang2008} and is very natural when estimating sensor positions: if a sensor is not connected to any anchors, then its absolute position cannot be determined uniquely.

First, we prove that the optimal solution of each subproblem in Algorithm~\ref{alg:node-based} can be obtained uniquely as an analytical solution.
\begin{theorem}\label{thm:analytic2}
Fix $U'=(\bm{u}'_1,\dots,\bm{u}'_m)$ and $V'=(\bm{v}'_1,\dots,\bm{v}'_m)$ arbitrarily. Then, for each $i=1,\dots,m$, the solutions $\bm{u}_i^*,\ \bm{v}_i^*$ of the following two optimization problems
\begin{gather*}
\min_{\bm{u}_i\in\mathbb{R}^d}F(\bm{u}'_1,\dots,\bm{u}'_{i-1},\bm{u}_i,\bm{u}'_{i+1},\dots,\bm{u}'_m,V';\gamma),\\
\min_{\bm{v}_i\in\mathbb{R}^d}F(U',\bm{v}'_1,\dots,\bm{v}'_{i-1},\bm{v}_i,\bm{v}'_{i+1},\dots,\bm{v}'_m;\gamma)
\end{gather*}
are respectively $\bm{u}_i^* = A_{\bm{u}_i}^{-1}\bm{b}_{\bm{u}_i}$, $\bm{v}_i^* = A_{\bm{v}_i}^{-1}\bm{b}_{\bm{v}_i}$, where
\begin{align*}
A_{\bm{u}_i} &\coloneqq \gamma I_d + \sum_{j\in E_{\mathrm{ss}}[i]}(\bm{v}'_i-\bm{v}'_j)(\bm{v}'_i-\bm{v}'_j)^\top + \sum_{k\in E_{\mathrm{sa}}[i]}(\bm{v}'_i-\bm{a}_k)(\bm{v}'_i-\bm{a}_k)^\top ,\\
\bm{b}_{\bm{u}_i} &\coloneqq \gamma\bm{v}'_i + \sum_{j\in E_{\mathrm{ss}}[i]}((\bm{u}'_j)^\top\bm{v}'_i - (\bm{u}'_j)^\top\bm{v}'_j+d_{ij}^2)(\bm{v}'_i-\bm{v}'_j) \\
&+ \sum_{k\in E_{\mathrm{sa}}[i]}(\bm{a}_k^\top\bm{v}'_i-\bm{a}_k^\top\bm{a}_k+d_{ik}^2)(\bm{v}'_i-\bm{a}_k),\\
A_{\bm{v}_i} &\coloneqq \gamma I_d + \sum_{j\in E_{\mathrm{ss}}[i]}(\bm{u}'_i-\bm{u}'_j)(\bm{u}'_i-\bm{u}'_j)^\top + \sum_{k\in E_{\mathrm{sa}}[i]}(\bm{u}'_i-\bm{a}_k)(\bm{u}'_i-\bm{a}_k)^\top ,\\
\bm{b}_{\bm{v}_i} &\coloneqq \gamma\bm{u}'_i + \sum_{j\in E_{\mathrm{ss}}[i]}( (\bm{v}'_j)^\top\bm{u}'_i - (\bm{v}'_j)^\top\bm{u}'_j+d_{ij}^2)(\bm{u}'_i-\bm{u}'_j) \\
&+ \sum_{k\in E_{\mathrm{sa}}[i]}(\bm{a}_k^\top\bm{u}'_i-\bm{a}_k^\top\bm{a}_k+d_{ik}^2)(\bm{u}'_i-\bm{a}_k).\\
\end{align*}
\end{theorem}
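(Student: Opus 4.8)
The plan is to show that each subproblem is the minimization of a strictly convex quadratic function on $\mathbb{R}^d$ and then to read off its minimizer from the first-order optimality condition. I would carry out the $\bm{u}_i$-subproblem in detail; the $\bm{v}_i$-subproblem follows verbatim after interchanging the roles of $U$ and $V$, because both $f(U,V)$ and $\|U-V\|_F^2$ are invariant under that interchange.

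First I would express the objective through the columns of $U$ and $V$. The factored matrix $Z$ has the block form with diagonal blocks $I_d$ and $U^\top V$ and off-diagonal blocks $V$ and $U^\top$; since $A_{ij}=w_{ij}w_{ij}^\top$ with $w_{ij}=(\bm{0}_d^\top,(\bm{e}_i-\bm{e}_j)^\top)^\top$ and $A_{ik}=w_{ik}w_{ik}^\top$ with $w_{ik}=(\bm{a}_k^\top,-\bm{e}_i^\top)^\top$, evaluating $A\bullet Z=w^\top Z w$ yields
\[
A_{ij}\bullet Z=(\bm{u}_i-\bm{u}_j)^\top(\bm{v}_i-\bm{v}_j),\qquad A_{ik}\bullet Z=(\bm{u}_i-\bm{a}_k)^\top(\bm{v}_i-\bm{a}_k).
\]
In particular each summand of $f$ is bilinear, hence affine in each single column of $U$ and of $V$.

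Next, with $V'$ fixed and $\bm{u}'_l$ fixed for $l\neq i$, I would collect the terms of $F$ that depend on $\bm{u}_i$. In $\tfrac{\gamma}{2}\|U-V'\|_F^2=\tfrac{\gamma}{2}\sum_l\|\bm{u}_l-\bm{v}'_l\|_2^2$ only $\tfrac{\gamma}{2}\|\bm{u}_i-\bm{v}'_i\|_2^2$ survives; in $f$, a squared residual depends on $\bm{u}_i$ exactly when its pair is incident to sensor $i$, i.e.\ for $j\in E_{\mathrm{ss}}[i]$ and $k\in E_{\mathrm{sa}}[i]$, and then the residual equals the affine function $(\bm{u}_i-\bm{u}'_j)^\top(\bm{v}'_i-\bm{v}'_j)-d_{ij}^2$ (resp.\ $(\bm{u}_i-\bm{a}_k)^\top(\bm{v}'_i-\bm{a}_k)-d_{ik}^2$) of $\bm{u}_i$. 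Hence, up to an additive constant, the subproblem objective is $\tfrac{\gamma}{2}\|\bm{u}_i-\bm{v}'_i\|_2^2$ plus a sum of squares of affine functions of $\bm{u}_i$, a convex quadratic whose Hessian is exactly $A_{\bm{u}_i}$. Since $\gamma>0$ we have $A_{\bm{u}_i}\succeq\gamma I_d\succ 0$, so the objective is strictly convex and coercive and its unique minimizer is the solution of the stationarity equation. Computing the gradient, setting it to zero, and moving the constant terms to the right-hand side gives $A_{\bm{u}_i}\bm{u}_i=\bm{b}_{\bm{u}_i}$ (using $(\bm{u}'_j)^\top(\bm{v}'_i-\bm{v}'_j)=(\bm{u}'_j)^\top\bm{v}'_i-(\bm{u}'_j)^\top\bm{v}'_j$ and the analogous identity for the anchor terms to match the stated form of $\bm{b}_{\bm{u}_i}$); invertibility of $A_{\bm{u}_i}$ then yields $\bm{u}_i^*=A_{\bm{u}_i}^{-1}\bm{b}_{\bm{u}_i}$. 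Swapping $U$ and $V$ throughout gives $A_{\bm{v}_i}\bm{v}_i=\bm{b}_{\bm{v}_i}$ and $\bm{v}_i^*=A_{\bm{v}_i}^{-1}\bm{b}_{\bm{v}_i}$.

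The main obstacle is computational rather than conceptual: I must be careful when evaluating $A_{ij}\bullet Z$ and $A_{ik}\bullet Z$ because the factored $Z$ is not symmetric (only the relevant quadratic forms $w^\top Z w$ collapse neatly), I must correctly identify which summands of $f$ involve $\bm{u}_i$ — precisely those indexed by the neighbour sets $E_{\mathrm{ss}}[i]$ and $E_{\mathrm{sa}}[i]$ — and I must rearrange the stationarity equation so that its right-hand side matches the claimed closed forms of $\bm{b}_{\bm{u}_i}$ and $\bm{b}_{\bm{v}_i}$. I note in passing that Assumption~\ref{asm:100} is not needed for this theorem: positive definiteness of $A_{\bm{u}_i}$ and $A_{\bm{v}_i}$ follows solely from $\gamma>0$.
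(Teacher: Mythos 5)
Your proposal is correct and follows essentially the same route as the paper: restrict $F$ to the single column $\bm{u}_i$ (resp.\ $\bm{v}_i$), observe via $A_{ij}\bullet Z=(\bm{u}_i-\bm{u}_j)^\top(\bm{v}_i-\bm{v}_j)$ and $A_{ik}\bullet Z=(\bm{u}_i-\bm{a}_k)^\top(\bm{v}_i-\bm{a}_k)$ that the subproblem objective is a $\gamma$-strongly convex quadratic with Hessian $A_{\bm{u}_i}\succeq\gamma I_d\succ 0$, and solve the stationarity equation $A_{\bm{u}_i}\bm{u}_i=\bm{b}_{\bm{u}_i}$, with the $\bm{v}_i$-case obtained by the symmetry $f(U,V)=f(V,U)$ just as the paper handles it "by the same calculation." Your added observation that Assumption~\ref{asm:100} is not needed here is also consistent with the paper, whose proof of this theorem likewise relies only on $\gamma>0$.
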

\begin{proof}
If we focus on only $\bm{u}_i$ in $F (U, V;\gamma)$ in particular, then we can represent $F(\bm{u}'_1,\dots,\bm{u}'_{i-1},\bm{u}_i,\bm{u}'_{i+1},\dots,\bm{u}'_m,V';\gamma)$ as
\begin{align}
&F(\bm{u}'_1,\dots,\bm{u}'_{i-1},\bm{u}_i,\bm{u}'_{i+1},\dots,\bm{u}'_m,V';\gamma) \nonumber\\
&\quad= \frac{1}{2}\bm{u}_i^\top A_{\bm{u}_i}\bm{u}_i -\bm{b}_{\bm{u}_i}^\top \bm{u}_i + [\text{a constant unrelated to $\bm{u}_i$}]. \label{eq:Fu_i}
\end{align}
From equation~(\ref{eq:Fu_i}), we can see that $F(\bm{u}'_1,\dots,\bm{u}'_{i-1},\bm{u}_i,\bm{u}'_{i+1},\dots,\bm{u}'_m,V';\gamma)$ is $\gamma$-strongly convex. Thus, the optimal solution of
$$\min_{\bm{u}_i\in\mathbb{R}^d}F(\bm{u}'_1,\dots,\bm{u}'_{i-1},\bm{u}_i,\bm{u}'_{i+1},\dots,\bm{u}'_m,V';\gamma)$$
is the stationary point of $F(\bm{u}'_1,\dots,\bm{u}'_{i-1},\bm{u}_i,\bm{u}'_{i+1},\dots,\bm{u}'_m,V';\gamma)$. Because
\begin{equation*}
\nabla_{\bm{u}_i}F(\bm{u}'_1,\dots,\bm{u}'_{i-1},\bm{u}_i,\bm{u}'_{i+1},\dots,\bm{u}'_m,V';\gamma) = A_{\bm{u}_i}\bm{u}_i - \bm{b}_{\bm{u}_i}
\end{equation*}
and $A_{\bm{u}_i}$ is positive definite and invertible in particular, we can obtain $\bm{u}_i^*$. We can also obtain $\bm{v}_i^*$ from the same calculation. \qed
\end{proof}

Next, we show that the sequence generated by Algorithm~\ref{alg:node-based} converges to a stationary point of the objective function $F$.
\begin{theorem}\label{thm:stationary2}
Fix the penalty parameter $\gamma$ in problem~(\ref{prob:biconvex2}) arbitrarily. Let $\mathcal{N}$ be the set of stationary points of $F$. Then, the sequence $\{(U^{(p)},V^{(p)})\}_{p=1}^\infty$ generated by Algorithm~\ref{alg:node-based} satisfies
\begin{equation}
\lim_{p\to\infty}\inf_{(U,V)\in\mathcal{N}}\|(U^{(p)},V^{(p)})-(U,V)\|_F = 0. \label{eq:converge to Nash}
\end{equation}
In particular, any accumulation point $(U^*,V^*)$ of the generated sequence is a stationary point of $F$.
\end{theorem}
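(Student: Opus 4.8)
The plan is to follow the classical convergence scheme for cyclic block coordinate descent in which every block is minimized exactly and each block subproblem is strongly convex: establish (i) monotone decrease of the objective, (ii) a sufficient-decrease estimate stemming from the $\gamma$-strong convexity of each block subproblem, which forces successive iterates to become arbitrarily close, (iii) boundedness of the iterates, and finally (iv) convergence by passing to the limit in the first-order optimality conditions of the block subproblems.

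First I would record the structural facts about $F(\cdot;\gamma)$: it is a polynomial in the entries of $U$ and $V$, hence of class $C^\infty$, and it is bounded below by $0$, being a sum of squares plus $\tfrac{\gamma}{2}\|U-V\|_F^2$. As shown in the proof of Theorem~\ref{thm:analytic2}, each of the $2m$ block subproblems solved in Algorithm~\ref{alg:node-based} is, up to an additive constant, of the form $\tfrac12\bm{u}_i^\top A_{\bm{u}_i}\bm{u}_i-\bm{b}_{\bm{u}_i}^\top\bm{u}_i$ with $A_{\bm{u}_i}\succeq\gamma I_d$ (and analogously for $\bm{v}_i$), hence $\gamma$-strongly convex with a unique minimizer. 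Since minimizing a $\gamma$-strongly convex $g$ starting from $\bm{x}$ decreases $g$ by at least $\tfrac{\gamma}{2}\|\bm{x}-\argmin g\|_2^2$, summing this over the $2m$ updates performed in one outer iteration gives
\begin{equation*}
F(U^{(p-1)},V^{(p-1)};\gamma)-F(U^{(p)},V^{(p)};\gamma)\ \ge\ \frac{\gamma}{2}\bigl(\|U^{(p)}-U^{(p-1)}\|_F^2+\|V^{(p)}-V^{(p-1)}\|_F^2\bigr).
\end{equation*}
Thus $\{F(U^{(p)},V^{(p)};\gamma)\}$ is nonincreasing and, being bounded below, converges; telescoping the inequality over $p$ then yields $\sum_{p}\bigl(\|U^{(p)}-U^{(p-1)}\|_F^2+\|V^{(p)}-V^{(p-1)}\|_F^2\bigr)<\infty$, so $\|U^{(p)}-U^{(p-1)}\|_F\to 0$ and $\|V^{(p)}-V^{(p-1)}\|_F\to 0$ (asymptotic regularity), and the same holds for every partially updated iterate produced inside an outer iteration.

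Next I would show that $\{(U^{(p)},V^{(p)})\}$ is bounded. By monotonicity all iterates lie in the sublevel set $\mathcal{L}\coloneqq\{(U,V):F(U,V;\gamma)\le F(U^{(0)},V^{(0)};\gamma)\}$, so it suffices to prove that $F(\cdot;\gamma)$ is coercive. On $\mathcal{L}$ the penalty term keeps $\|U-V\|_F$ bounded; writing $\bm{\delta}_i=\bm{u}_i-\bm{v}_i$, the identity $(\bm{u}_i-\bm{a}_k)^\top(\bm{v}_i-\bm{a}_k)=\|\bm{u}_i-\bm{a}_k\|_2^2-(\bm{u}_i-\bm{a}_k)^\top\bm{\delta}_i$ shows that boundedness of the sensor--anchor terms of $f$ forces $\|\bm{u}_i\|_2$, hence $\|\bm{v}_i\|_2$, to be bounded for every sensor $i$ directly connected to an anchor; an analogous computation with the sensor--sensor terms propagates boundedness along any chain of measured distances, so by Assumption~\ref{asm:100} every $\bm{u}_i,\bm{v}_i$ is bounded on $\mathcal{L}$. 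Consequently $\mathcal{L}$ is bounded and the iterate sequence has accumulation points. I expect this coercivity step to be the main obstacle, since it is the only place where Assumption~\ref{asm:100} genuinely enters and it requires the connectivity propagation argument; everything else is routine.

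Finally, let $(U^{(p_q)},V^{(p_q)})\to(U^*,V^*)$ along a subsequence. From $\|U^{(p)}-U^{(p-1)}\|_F\to 0$ and $\|V^{(p)}-V^{(p-1)}\|_F\to 0$ we also get $(U^{(p_q-1)},V^{(p_q-1)})\to(U^*,V^*)$ and, likewise, every partially updated iterate within outer iteration $p_q$ converges to $(U^*,V^*)$. By the definition of the updates in Algorithm~\ref{alg:node-based} together with Theorem~\ref{thm:analytic2}, the first-order conditions
\begin{gather*}
\nabla_{\bm{u}_i}F(\bm{u}_1^{(p)},\dots,\bm{u}_i^{(p)},\bm{u}_{i+1}^{(p-1)},\dots,\bm{u}_m^{(p-1)},V^{(p-1)};\gamma)=\bm{0},\\
\nabla_{\bm{v}_i}F(U^{(p)},\bm{v}_1^{(p)},\dots,\bm{v}_i^{(p)},\bm{v}_{i+1}^{(p-1)},\dots,\bm{v}_m^{(p-1)};\gamma)=\bm{0}
\end{gather*}
hold for every $i$ and every $p$. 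Evaluating these along $p=p_q$, letting $q\to\infty$, and using the continuity of $\nabla F$ gives $\nabla_{\bm{u}_i}F(U^*,V^*;\gamma)=\bm{0}$ and $\nabla_{\bm{v}_i}F(U^*,V^*;\gamma)=\bm{0}$ for all $i$, i.e.\ $\nabla F(U^*,V^*;\gamma)=\bm{0}$, so $(U^*,V^*)\in\mathcal{N}$. To obtain (\ref{eq:converge to Nash}) I would argue by contradiction: if it failed, some subsequence would satisfy $\inf_{(U,V)\in\mathcal{N}}\|(U^{(p_q)},V^{(p_q)})-(U,V)\|_F\ge\varepsilon>0$; by boundedness it would have a further accumulation point, which by the argument just given lies in $\mathcal{N}$, contradicting the bound. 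Hence (\ref{eq:converge to Nash}) holds, and in particular every accumulation point of the generated sequence is a stationary point of $F$.
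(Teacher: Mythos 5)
Your proposal is correct, but it proves the theorem by a different, self-contained route than the paper. The paper does not re-derive the block coordinate descent convergence theory: it invokes Corollary~2.4 of \cite{Xu2013}, whose conclusion is stated in terms of Nash equilibria, and therefore spends its effort on (i) Lemma~\ref{lem:equivalence2}, the equivalence of Nash equilibria and stationary points for differentiable multiconvex functions, and (ii) verifying the three hypotheses of that corollary --- continuity/boundedness below and existence of a Nash equilibrium, $\gamma$-strong convexity of each block subproblem (from equation~(\ref{eq:Fu_i})), and boundedness of the iterates via the level-set result Lemma~\ref{lem:bound} and Corollary~\ref{cor:bound2}. You instead reprove the underlying convergence mechanism directly: the $\gamma$-strong-convexity sufficient-decrease estimate, summability of squared successive displacements (asymptotic regularity), boundedness via coercivity under Assumption~\ref{asm:100}, and passing to the limit in the exact block optimality conditions, finishing with a compactness/contradiction argument for~(\ref{eq:converge to Nash}). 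Each step is sound; note in particular that your coercivity sketch (penalty term bounds $\|U-V\|_F$, anchor terms bound sensors adjacent to anchors, connectivity propagates boundedness) is essentially a proof of Lemma~\ref{lem:bound}, which the paper states without proof, so you actually supply more detail there than the paper does, though you would need to write out the propagation step carefully in a full write-up. What the two approaches buy: the paper's route is shorter on the page and reuses a general result (with Lemma~\ref{lem:equivalence2} needed only to translate its Nash-equilibrium conclusion into stationarity), whereas your route is elementary and self-contained, makes the sufficient-decrease and asymptotic-regularity properties of Algorithm~\ref{alg:node-based} explicit, and avoids any appeal to the Nash-equilibrium formalism.
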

The consequence of Theorem~\ref{thm:stationary2} is based on the result of \cite{Xu2013}. By Corollary~2.4 in \cite{Xu2013}, if all three of the following conditions are satisfied, then equation~(\ref{eq:converge to Nash}) holds when the \textit{stationary point} of $F$ in the definition of $\mathcal{N}$ is replaced by the \textit{Nash equilibrium} of $F$ (see Definition~\ref{def:Nash} below).
\begin{enumerate}[\text{Condition}~(a):]
\item $F$ is continuous, bounded below, and has a Nash equilibrium.\label{condition:F}
\item The objective function of each subproblem is strongly convex.\footnote{To be more precise, a stronger assumption about the parameter of the strongly convex function is needed. However, in the present problem, the stronger assumption is satisfied automatically because the parameter is a constant $\gamma$. See Assumption~2 in \cite{Xu2013} for details.}\label{condition:strongly convex}
\item The sequence generated by Algorithm~\ref{alg:node-based} is bounded. \label{condition:bound}
\end{enumerate}
In the following, we prove Theorem~\ref{thm:stationary2} by showing the equivalence between the Nash equilibrium and the stationary point of $F$ and then verifying that all three conditions are satisfied.

We can see from equation~(\ref{eq:Fu_i}) that the function $F(U, V;\gamma)$ is convex on $\mathbb{R}^d$ if we focus only on each column of $U$ and $V$. Such a function $F$ with this property is called multiconvex~\cite{Xu2013}. For simplicity, let $\mathcal{X} \coloneqq \mathbb{R}^{n_1}\times\cdots\times\mathbb{R}^{n_s}$ (where $n_1,\dots,n_s \in \mathbb{N}$),\footnote{In the problem, we are considering the case in which $s = 2m$ and $n_i \equiv d$, so the simplification of $\mathcal{X}$ does not affect the discussion in this paper.} and when $\bm{x} \in \mathcal{X}$ is represented as $\bm{x} = (\bm{x}_1,\dots,\bm{x}_s)$, then $\bm{x}_i \in \mathbb{R}^{n_i}\ (i=1,\dots,s)$.
\begin{definition}\label{def:multiconvex}
A function $g: \mathcal{X}\to\mathbb{R}$ is called multiconvex on $\mathcal{X}$ (with respect to the block division $\bm{x} = (\bm{x}_1,\dots,\bm{x}_s)\in\mathcal{X}$) if for all $i=1,\dots,s$ and all $\bm{x}_j\in\mathbb{R}^{n_j}\ (j=1,\dots,i-1,i+1,\dots,m)$, the function
\begin{equation*}
g(\bm{x}_1,\dots,\bm{x}_{i-1},\bullet,\bm{x}_{i+1},\dots,\bm{x}_{s}):\ \mathbb{R}^{n_i}\to\mathbb{R}
\end{equation*}
is convex.
\end{definition}
One of the concepts of minimality for a multiconvex function is the Nash equilibrium~\cite{Xu2013}, which appears in Condition~(\ref{condition:F}).
\begin{definition}\label{def:Nash}
For a function $g:\mathcal{X}\to\mathbb{R}$, $(\bm{x}_1^*,\dots,\bm{x}_s^*)\in\mathcal{X}$ is called a Nash equilibrium of $g$ (with respect to the block division as in Definition~\ref{def:multiconvex}) if
\begin{equation*}
g(\bm{x}_1^*,\dots,\bm{x}_{i-1}^*,\bm{x}_i^*,\bm{x}_{i+1}^*,\dots,\bm{x}_s^*) \le g(\bm{x}_1^*,\dots,\bm{x}_{i-1}^*,\bm{x}_i,\bm{x}_{i+1}^*,\dots,\bm{x}_s^*)
\end{equation*}
holds for all $i=1,\dots,s$ and all $\bm{x}_i\in\mathbb{R}^{n_i}$.
\end{definition}
Gorski \textit{et al.} \cite{Gorski2007} proved the equivalence between the stationary point and the Nash equilibrium in the case of $s = 2$.\footnote{In \cite{Gorski2007}, the term ``partial optimum" is used instead of ``Nash equilibrium.''} Herein, we extend the equivalence to the case of arbitrary $s$.
\begin{lemma}\label{lem:equivalence2}
Let $g:\mathcal{X}\to\mathbb{R}$ be once differentiable and multiconvex. Then, $\bm{x}^* =(\bm{x}_1^*,\dots,\bm{x}_s^*)\in \mathcal{X}$ is a stationary point of $g$ if and only if $\bm{x}^*$ is a Nash equilibrium of $g$.
\end{lemma}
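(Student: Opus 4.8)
The plan is to prove both implications directly from the definitions, exploiting the fact that once-differentiability lets us characterize a stationary point as the vanishing of the full gradient $\nabla g(\bm{x}^*) = \bm{0}$, while multiconvexity lets us relate each block of that gradient to a one-dimensional (block-wise) convex optimization condition. The key observation is that both notions decouple across blocks: $\nabla g(\bm{x}^*) = \bm{0}$ is equivalent to $\nabla_{\bm{x}_i} g(\bm{x}^*) = \bm{0}$ for every $i=1,\dots,s$ simultaneously, and the Nash-equilibrium condition is a conjunction of $s$ separate block-minimality conditions. So it suffices to show, for each fixed $i$, that $\nabla_{\bm{x}_i} g(\bm{x}^*) = \bm{0}$ is equivalent to $\bm{x}_i^*$ minimizing the restricted function $\phi_i(\bm{x}_i) \coloneqq g(\bm{x}_1^*,\dots,\bm{x}_{i-1}^*,\bm{x}_i,\bm{x}_{i+1}^*,\dots,\bm{x}_s^*)$ over $\mathbb{R}^{n_i}$.

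First I would fix $i$ and record that $\phi_i$ is convex (by multiconvexity of $g$) and differentiable (since $g$ is), with $\nabla \phi_i(\bm{x}_i) = \nabla_{\bm{x}_i} g(\bm{x}_1^*,\dots,\bm{x}_i,\dots,\bm{x}_s^*)$; in particular $\nabla\phi_i(\bm{x}_i^*) = \nabla_{\bm{x}_i} g(\bm{x}^*)$. For the ``only if'' direction, if $\bm{x}^*$ is a stationary point then $\nabla_{\bm{x}_i} g(\bm{x}^*) = \bm{0}$, so $\bm{x}_i^*$ is a stationary point of the convex differentiable function $\phi_i$; for a convex differentiable function on $\mathbb{R}^{n_i}$, a stationary point is a global minimizer, hence the Nash inequality holds for block $i$. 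Since $i$ was arbitrary, $\bm{x}^*$ is a Nash equilibrium. For the ``if'' direction, if $\bm{x}^*$ is a Nash equilibrium then for each $i$, $\bm{x}_i^*$ globally minimizes the differentiable function $\phi_i$ on the open set $\mathbb{R}^{n_i}$, so by Fermat's rule $\nabla\phi_i(\bm{x}_i^*) = \nabla_{\bm{x}_i} g(\bm{x}^*) = \bm{0}$; collecting these over all $i$ gives $\nabla g(\bm{x}^*) = \bm{0}$, i.e., $\bm{x}^*$ is a stationary point.

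I do not expect a serious obstacle here: the argument is essentially the block-separability of the gradient combined with the elementary fact that stationarity and global minimality coincide for convex differentiable functions on all of $\mathbb{R}^{n_i}$. The one point deserving care is making explicit that ``stationary point of $g$'' means $\nabla g(\bm{x}^*) = \bm{0}$ (as opposed to a constrained first-order condition) and that the partial gradient $\nabla_{\bm{x}_i} g$ at $\bm{x}^*$ really is the gradient of the restricted function $\phi_i$ — this is immediate from the definition of partial derivatives but should be stated so the reader sees why convexity in each block transfers cleanly. The only genuine extension over the $s=2$ case treated in \cite{Gorski2007} is notational: nothing in the argument uses $s=2$, since both conditions are conjunctions over the $s$ blocks and each block is handled independently.
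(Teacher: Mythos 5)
Your proposal is correct and follows essentially the same route as the paper: decompose into the block-restricted functions, use the first-order characterization of minimizers of a differentiable convex function on $\mathbb{R}^{n_i}$ (the paper writes out the gradient inequality explicitly) for the stationary-point-implies-Nash direction, and Fermat's rule block by block for the converse. No gaps to report.
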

\begin{proof}
We begin by proving the ``if" part. If we assume that $\bm{x}^*$ is a stationary point of $g$, then because
\begin{equation*}
g_i(\bm{x}_i) \coloneqq g(\bm{x}_1^*,\dots,\bm{x}_{i-1}^*,\bm{x}_i,\bm{x}_{i+1}^*,\dots,\bm{x}_s^*)
\end{equation*}
is convex on $\mathbb{R}^{n_i}$ for all $i=1,\dots,m$,
\begin{equation}
g_i(\bm{x}_i) \ge g_i(\bm{x}_i^*) + \nabla_{\bm{x}_i}g_i(\bm{x}_i^*)^\top(\bm{x}_i-\bm{x}_i^*) \label{eq:thm4.2.1}
\end{equation}
holds for all $\bm{x}_i \in \mathbb{R}^{n_i}$. Because $\nabla_{\bm{x}_i}g_i(\bm{x}_i^*) = \bm{0}$ follows from the assumption of $\bm{x}^*$ being a stationary point of $g$, we can say from inequality~(\ref{eq:thm4.2.1}) that $g_i(\bm{x}_i) \ge g_i(\bm{x}_i^*)$ for all $\bm{x}_i\in\mathbb{R}^{n_i}$. Because $i$ is arbitrary, we can conclude that $\bm{x}^*$ is a Nash equilibrium of $g$.

Next, we prove the ``only if" part. If we assume that $\bm{x}^*$ is a Nash equilibrium of $g$, then for each $i = 1,\dots,s$, $g_i(\bm{x}_i)$ attains its minimum value at $\bm{x}_i = \bm{x}_i^*$, from which we obtain $\nabla_{\bm{x}_i}g_i(\bm{x}_i^*)=\bm{0}$. Thus, $\bm{x}^*$ is a stationary point of $g$. \qed
\end{proof}
\begin{lemma}\label{lem:bound}
Suppose that Assumption~\ref{asm:100} holds. Then, for all $\alpha$, the level set $S_F(\alpha) \coloneqq \{(U,V)\mid F(U,V;\gamma) \le \alpha\}$ is bounded and closed.
\end{lemma}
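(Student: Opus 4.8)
The goal is to show that every level set $S_F(\alpha)$ is closed and bounded. Closedness is immediate: $F(\cdot,\cdot;\gamma)$ is a polynomial in the entries of $U$ and $V$, hence continuous, so $S_F(\alpha) = F^{-1}((-\infty,\alpha])$ is closed as the preimage of a closed set. The substance is boundedness, and the plan is to show that $F(U,V;\gamma) \to \infty$ whenever $\|(U,V)\|_F \to \infty$ (coercivity), which immediately yields that each $S_F(\alpha)$ is bounded.

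\textbf{Strategy for coercivity.} Write $F = \tfrac{\gamma}{2}\|U-V\|_F^2 + f(U,V)$ with $\gamma > 0$. First I would use the penalty term to reduce to the "diagonal'' case: if $\|(U,V)\|_F \to \infty$ while $\|U - V\|_F$ stays bounded, then $\|U\|_F \to \infty$ and $V$ differs from $U$ by a bounded amount, so it suffices to control $f$ from below along sequences with $\|U\|_F\to\infty$ and $V = U + (\text{bounded})$; and if $\|U-V\|_F\to\infty$, the penalty term alone forces $F\to\infty$. So the crux is to show that $f(U,U) \to \infty$ as $\|U\|_F \to \infty$ — and more robustly, that $f(U,V)\to\infty$ as $\|U\|_F\to\infty$ with $U-V$ bounded. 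Here is where Assumption~\ref{asm:100} enters. Recall that for $ij \in E_{\mathrm{ss}}$ the term $A_{ij}\bullet \big(\begin{smallmatrix} I_d\\ U^\top\end{smallmatrix}\big)\big(\begin{smallmatrix} I_d\\ V^\top\end{smallmatrix}\big)^\top$ equals $(\bm{u}_i-\bm{u}_j)^\top(\bm{v}_i-\bm{v}_j)$, and for $ik\in E_{\mathrm{sa}}$ it equals $(\bm{u}_i - \bm{a}_k)^\top(\bm{v}_i - \bm{a}_k)$; so on the diagonal $U=V$ these are $\|\bm{u}_i-\bm{u}_j\|_2^2$ and $\|\bm{u}_i-\bm{a}_k\|_2^2$. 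Thus $f(U,U)$ is a sum of squares of $\big(\|\bm{u}_i-\bm{u}_j\|_2^2 - d_{ij}^2\big)$ and $\big(\|\bm{u}_i - \bm{a}_k\|_2^2 - d_{ik}^2\big)$. I would argue: if $\|U\|_F\to\infty$ then some $\|\bm{u}_i\|_2\to\infty$ along a subsequence; using connectivity to an anchor (Assumption~\ref{asm:100}), walk along a path from sensor $i$ to an anchor $k$ — if all intermediate edge-length-discrepancies stayed bounded, a triangle-inequality/telescoping argument would bound $\|\bm{u}_i\|_2$ in terms of $\|\bm{a}_k\|_2$ and the (bounded) discrepancies along the path, a contradiction; hence at least one term in the sum defining $f(U,U)$ blows up, so $f(U,U)\to\infty$. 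Finally I would note the bound is uniform enough to upgrade from $U=V$ to "$U-V$ bounded'' by absorbing the perturbation (each inner product $(\bm{u}_i-\bm{u}_j)^\top(\bm{v}_i-\bm{v}_j)$ differs from $\|\bm{u}_i-\bm{u}_j\|_2^2$ by a term linear in the bounded quantity $\|(\bm{u}_i-\bm{v}_i) - (\bm{u}_j-\bm{v}_j)\|_2$ times $\|\bm{u}_i-\bm{u}_j\|_2$), so the quadratic growth still dominates.

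\textbf{Main obstacle.} The delicate point is making the path/telescoping argument rigorous when distances contain errors: one does not have exact equalities $\|\bm{u}_i-\bm{u}_j\|_2 = d_{ij}$, only that large discrepancies make $f$ large. The clean way is a contrapositive/compactness argument: fix $\alpha$ and suppose $(U^{(\ell)},V^{(\ell)})\in S_F(\alpha)$ with $\|(U^{(\ell)},V^{(\ell)})\|_F\to\infty$; the bound $F\le\alpha$ forces $\|U^{(\ell)}-V^{(\ell)}\|_F$ bounded (by $\sqrt{2\alpha/\gamma}$) and forces every squared discrepancy $\big(\|\bm{u}_i^{(\ell)}-\bm{u}_j^{(\ell)}\|_2^2 - d_{ij}^2\big)^2$ and $\big(\|\bm{u}_i^{(\ell)}-\bm{a}_k\|_2^2 - d_{ik}^2\big)^2$ to be $\le 2\alpha$ — uniformly in $\ell$. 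Hence each $\|\bm{u}_i^{(\ell)}-\bm{u}_j^{(\ell)}\|_2$ (for $ij\in E_{\mathrm{ss}}$) and each $\|\bm{u}_i^{(\ell)}-\bm{a}_k\|_2$ (for $ik\in E_{\mathrm{sa}}$) is bounded uniformly in $\ell$; by Assumption~\ref{asm:100} every sensor is joined to some anchor by a path in the graph $(V,E_{\mathrm{ss}}\cup E_{\mathrm{sa}})$, so summing the edge-length bounds along that path (triangle inequality) gives a uniform bound on $\|\bm{u}_i^{(\ell)}\|_2$ for every $i$, contradicting $\|U^{(\ell)}\|_F\to\infty$; and then $\|V^{(\ell)}\|_F \le \|U^{(\ell)}\|_F + \|U^{(\ell)}-V^{(\ell)}\|_F$ is bounded too. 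This contradiction shows $S_F(\alpha)$ is bounded, completing the proof. The only bookkeeping to watch is that the path bound depends on the fixed anchor positions $\bm{a}_k$, the graph structure, and $\alpha$, but not on $\ell$ — which is exactly what the uniform-in-$\ell$ discrepancy bounds provide.
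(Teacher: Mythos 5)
The paper does not actually contain a proof of this lemma: the authors omit it, citing the page limit and pointing to closely related results in So--Ye and Chang \emph{et al.}, so there is nothing to compare line by line. Your argument is the natural one and is essentially correct: closedness from continuity of $F$, and boundedness from the level-set bound on each individual term of $F$ plus connectivity to an anchor (Assumption~1) and a triangle-inequality walk along a path. One point needs tightening, and you half-anticipated it in your strategy paragraph but then glossed over it in the final "clean" argument: $F(U^{(\ell)},V^{(\ell)};\gamma)\le\alpha$ does \emph{not} directly force $\bigl(\|\bm{u}_i^{(\ell)}-\bm{u}_j^{(\ell)}\|_2^2-d_{ij}^2\bigr)^2\le 2\alpha$; what it forces is $\bigl((\bm{u}_i^{(\ell)}-\bm{u}_j^{(\ell)})^\top(\bm{v}_i^{(\ell)}-\bm{v}_j^{(\ell)})-d_{ij}^2\bigr)^2\le 2\alpha$ (and similarly with $\bm{a}_k$), since the terms of $f$ involve the $U$--$V$ cross products, not squared norms. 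The absorption you sketched does repair this: writing $\bm{v}_i=\bm{u}_i+\bm{\delta}_i$ with $\|\bm{\delta}_i\|_2\le\|U^{(\ell)}-V^{(\ell)}\|_F\le\sqrt{2\alpha/\gamma}$ and using Cauchy--Schwarz gives
\begin{equation*}
\|\bm{u}_i-\bm{u}_j\|_2^2 \le d_{ij}^2+\sqrt{2\alpha}+2\sqrt{2\alpha/\gamma}\,\|\bm{u}_i-\bm{u}_j\|_2 ,
\end{equation*}
and solving this quadratic inequality yields a bound on $\|\bm{u}_i-\bm{u}_j\|_2$ depending only on $\alpha$, $\gamma$, and $d_{ij}$ (uniform in $\ell$); the same works for $\|\bm{u}_i-\bm{a}_k\|_2$ on sensor--anchor edges. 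With these uniform edge-length bounds in hand, your path argument bounds every $\|\bm{u}_i^{(\ell)}\|_2$ in terms of the anchor coordinates and the graph, and $\|V^{(\ell)}\|_F\le\|U^{(\ell)}\|_F+\sqrt{2\alpha/\gamma}$ finishes boundedness. So the proposal is correct once that one step is stated through the cross-product terms rather than the diagonal ones; as written, the last paragraph's claim is literally false even though the intended fix is already present in your plan.
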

Because similar (but not the same) results were already pointed out in \cite{Chang2016,So2007}, we omit the proof of Lemma~\ref{lem:bound} because of the page limit. Note that if Assumption~\ref{asm:100} does not hold, then $S_F(\alpha)$ is always not bounded.
\begin{corollary}\label{cor:bound2}
For any initial point $(U^{(0)},V^{(0)})$, the sequence $\{(U^{(p)},V^{(p)})\}_{p=1}^\infty$ generated by Algorithm~\ref{alg:node-based} is bounded.
\end{corollary}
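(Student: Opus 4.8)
The plan is to exploit the defining property of block coordinate descent: each subproblem in Algorithm~\ref{alg:node-based} is solved to exact (global) optimality, so the value of $F$ never increases along the iterates. First I would set $\alpha_0 \coloneqq F(U^{(0)},V^{(0)};\gamma)$, which is a finite real number because $F(\cdot,\cdot;\gamma)$ is finite-valued on all of $\mathbb{R}^{d\times m}\times\mathbb{R}^{d\times m}$. Then I would track $F$ through one outer iteration. When the first \textbf{for} loop replaces $\bm{u}_1^{(p-1)}$ by $\bm{u}_1^{(p)} = \argmin_{\bm{u}_1}F(\bm{u}_1,\bm{u}_2^{(p-1)},\dots,\bm{u}_m^{(p-1)},V^{(p-1)};\gamma)$ — which exists and is unique by Theorem~\ref{thm:analytic2} — the value of $F$ at the updated point is no larger than at the point before the update. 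Iterating this observation through the updates of $\bm{u}_1,\dots,\bm{u}_m$ and then $\bm{v}_1,\dots,\bm{v}_m$ yields $F(U^{(p)},V^{(p)};\gamma) \le F(U^{(p-1)},V^{(p-1)};\gamma)$ for every $p$, and hence by induction $F(U^{(p)},V^{(p)};\gamma) \le \alpha_0$ for all $p\ge 1$.

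Consequently every iterate $(U^{(p)},V^{(p)})$ lies in the level set $S_F(\alpha_0)$. Under Assumption~\ref{asm:100}, Lemma~\ref{lem:bound} guarantees that $S_F(\alpha_0)$ is bounded and closed, so the sequence $\{(U^{(p)},V^{(p)})\}_{p=1}^\infty$ is contained in a bounded set and is therefore bounded, which is the assertion of the corollary.

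There is no serious obstacle here: the corollary is essentially an immediate combination of the exact-minimization property of the subproblems (Theorem~\ref{thm:analytic2}) with the level-set boundedness established in Lemma~\ref{lem:bound}. The only points that need a little care are, first, the monotonicity bookkeeping across the intermediate single-block updates within an outer iteration — it is the minimization at each such step, not merely a comparison of $(U^{(p-1)},V^{(p-1)})$ with $(U^{(p)},V^{(p)})$, that validates the chain of inequalities — and, second, the (implicit) reliance on Assumption~\ref{asm:100}, without which Lemma~\ref{lem:bound} fails and the conclusion would no longer hold.
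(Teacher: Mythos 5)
Your proof is correct and follows essentially the same route as the paper: both use the exact minimization of each subproblem to get the monotone decrease $F(U^{(p)},V^{(p)};\gamma)\le F(U^{(0)},V^{(0)};\gamma)$, place the iterates in the level set $S_F(F(U^{(0)},V^{(0)};\gamma))$, and invoke Lemma~\ref{lem:bound} (under Assumption~\ref{asm:100}) for boundedness. Your added remarks on the step-by-step monotonicity bookkeeping and the implicit reliance on Assumption~\ref{asm:100} are accurate but not a departure from the paper's argument.
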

\begin{proof}
Let $\alpha\coloneqq F(U^{(0)},V^{(0)};\gamma)$. Because each minimization subproblem in Algorithm~\ref{alg:node-based} is strictly optimized, we can say that $(U^{(p)},V^{(p)})\in\ S_F(\alpha)$ for all $p\in\mathbb{N}$, i.e., $\{(U^{(p)},V^{(p)})\}_{p=1}^\infty\subseteq S_F(\alpha)$, which is bounded from Lemma~\ref{lem:bound}. \qed
\end{proof}
We can now prove Theorem~\ref{thm:stationary2} on the basis of the above claims.
\begin{proof}
The continuity and below-boundedness of $F$ are evident from its definition. When combined with Lemma~\ref{lem:bound}, the global optimal solution of problem~(\ref{prob:biconvex2}) is guaranteed to exist, from which the existence of a Nash equilibrium can be proved. Therefore, we can say that Condition~(\ref{condition:F}) holds. In addition, from equation~(\ref{eq:Fu_i}), the objective function of each subproblem is $\gamma$-strongly convex, and thus Condition~(\ref{condition:strongly convex}) holds. Finally, Condition~(\ref{condition:bound}) is also satisfied from Corollary~\ref{cor:bound2}. Therefore, the conditions of Corollary~2.4 in \cite{Xu2013} are all satisfied, from which we can show equation~(\ref{eq:converge to Nash}) by using the equivalence between the stationary point and the Nash equilibrium that was shown in Lemma~\ref{lem:equivalence2}. Because $F$ is of class $C^1$, $\mathcal{N}$ is closed, from which we can easily show the last part of Theorem~\ref{thm:stationary2}.\qed
\end{proof}

Finally, we show that the $U^{(p)}$ and $V^{(p)}$ generated by Algorithm~\ref{alg:node-based} coincide with each other at any accumulation point if $\gamma$ is larger than a real-valued threshold, which does not generally hold in the quadratic penalty method.
\begin{theorem}\label{thm:gamma}
For any initial point $(U^{(0)},V^{(0)})$ such that $U^{(0)}=V^{(0)}$, if
\begin{equation}
\gamma > \frac{1}{2}\sqrt{2f(U^{(0)},V^{(0)})}\max_{1\le i\le m}\sqrt{4|E_{\mathrm{ss}}[i]|+|E_{\mathrm{sa}}[i]|},\label{eq:gamma}
\end{equation}
then any accumulation point $(U^*,V^*)$ of the sequence $\{(U^{(p)},V^{(p)})\}_{p=1}^\infty$ generated by Algorithm~\ref{alg:node-based} satisfies $U^*=V^*$.
\end{theorem}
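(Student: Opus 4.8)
The plan is to combine three ingredients: the monotonicity of $F$ along Algorithm~\ref{alg:node-based}, the stationarity of the accumulation point (Theorem~\ref{thm:stationary2}), and a maximum‑component argument applied to the columnwise differences $\bm{\delta}_i \coloneqq \bm{u}_i^* - \bm{v}_i^*$.

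First I would use that each subproblem optimizes one block, so $\{F(U^{(p)},V^{(p)};\gamma)\}_p$ is nonincreasing; hence the whole sequence, and therefore also any accumulation point $(U^*,V^*)$, lies in the closed level set $S_F(\alpha)$ with $\alpha \coloneqq F(U^{(0)},V^{(0)};\gamma)$ (Lemma~\ref{lem:bound}, as in Corollary~\ref{cor:bound2}). Since $U^{(0)}=V^{(0)}$ we have $\alpha = f(U^{(0)},V^{(0)})$, and since $f\ge 0$ this yields $f(U^*,V^*)\le F(U^*,V^*;\gamma)\le f(U^{(0)},V^{(0)})$. Introducing the residuals $r_{ij}\coloneqq (\bm{u}_i^*-\bm{u}_j^*)^\top(\bm{v}_i^*-\bm{v}_j^*)-d_{ij}^2$ and $r_{ik}\coloneqq(\bm{u}_i^*-\bm{a}_k)^\top(\bm{v}_i^*-\bm{a}_k)-d_{ik}^2$, this bound reads $\sum_{ij\in E_{\mathrm{ss}}}r_{ij}^2+\sum_{ik\in E_{\mathrm{sa}}}r_{ik}^2 \le 2f(U^{(0)},V^{(0)})$, which will control the right‑hand side at the very end.

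Next, by Theorem~\ref{thm:stationary2}, $\nabla F(U^*,V^*;\gamma)=\bm{0}$. Computing the columnwise gradients $\nabla_{\bm{u}_i}F$ and $\nabla_{\bm{v}_i}F$ exactly as in the proof of Theorem~\ref{thm:analytic2} gives, for each $i$,
\begin{align*}
\gamma\bm{\delta}_i &= -\sum_{j\in E_{\mathrm{ss}}[i]}r_{ij}(\bm{v}_i^*-\bm{v}_j^*) - \sum_{k\in E_{\mathrm{sa}}[i]}r_{ik}(\bm{v}_i^*-\bm{a}_k),\\
-\gamma\bm{\delta}_i &= -\sum_{j\in E_{\mathrm{ss}}[i]}r_{ij}(\bm{u}_i^*-\bm{u}_j^*) - \sum_{k\in E_{\mathrm{sa}}[i]}r_{ik}(\bm{u}_i^*-\bm{a}_k).
\end{align*}
Subtracting the two equations makes the $\gamma$‑terms add and collapses the residual terms into differences of $\bm{\delta}$'s:
\begin{equation*}
2\gamma\bm{\delta}_i = \sum_{j\in E_{\mathrm{ss}}[i]}r_{ij}(\bm{\delta}_i-\bm{\delta}_j) + \sum_{k\in E_{\mathrm{sa}}[i]}r_{ik}\,\bm{\delta}_i .
\end{equation*}
Letting $M\coloneqq\max_{1\le i\le m}\|\bm{\delta}_i\|_2$ and choosing $i^*$ attaining it, I would take norms at $i=i^*$, use $\|\bm{\delta}_{i^*}-\bm{\delta}_j\|_2\le 2M$, and divide by $M$ in the case $M>0$ to get $2\gamma\le 2\sum_{j\in E_{\mathrm{ss}}[i^*]}|r_{i^*j}|+\sum_{k\in E_{\mathrm{sa}}[i^*]}|r_{i^*k}|$. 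Applying the Cauchy--Schwarz inequality to the pairing of the weight vector $(2,\dots,2,1,\dots,1)$ (with $|E_{\mathrm{ss}}[i^*]|$ twos and $|E_{\mathrm{sa}}[i^*]|$ ones) against $\bigl((|r_{i^*j}|)_j,(|r_{i^*k}|)_k\bigr)$, and then bounding $\sum_{j}r_{i^*j}^2+\sum_k r_{i^*k}^2\le 2f(U^{(0)},V^{(0)})$ by Step~1, produces $\gamma\le\frac12\sqrt{2f(U^{(0)},V^{(0)})}\,\max_{1\le i\le m}\sqrt{4|E_{\mathrm{ss}}[i]|+|E_{\mathrm{sa}}[i]|}$, contradicting~(\ref{eq:gamma}). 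Hence $M=0$, i.e. $U^*=V^*$.

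The routine parts are the columnwise gradient computation (already carried out for Theorem~\ref{thm:analytic2}) and the monotonicity of $F$. The load‑bearing step is the differencing of the two stationarity equations so that the residual contributions telescope into $\bm{\delta}_i-\bm{\delta}_j$; after that, the maximum‑component trick is what reduces the vector identity to a scalar inequality whose right‑hand side is controlled by $f(U^{(0)},V^{(0)})$ through the level‑set bound, and the weights $(2,\dots,2,1,\dots,1)$ in Cauchy--Schwarz are precisely what yields the constant $4|E_{\mathrm{ss}}[i]|+|E_{\mathrm{sa}}[i]|$ appearing in~(\ref{eq:gamma}). I expect spotting this combination to be the main obstacle; everything else is bookkeeping.
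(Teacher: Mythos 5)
Your proof is correct, and it reaches the same threshold by a route that is organized differently from the paper's. The shared skeleton is identical: monotone decrease of $F$ plus $U^{(0)}=V^{(0)}$ gives the residual bound $\sum_{ij}r_{ij}^2+\sum_{ik}r_{ik}^2\le 2f(U^{(0)},V^{(0)})$ at the accumulation point (the paper's inequality~(\ref{eq:alpha})), and Theorem~\ref{thm:stationary2} supplies stationarity. Where you diverge is the concluding mechanism. The paper works rowwise: it forms the scalar identity $(U_l^*-V_l^*)^\top\bigl(2\gamma I_m-\bar{A}\bigr)(U_l^*-V_l^*)=0$ with $\bar{A}=\sum_{ij}\alpha_{ij}^*\bar{A}_{ij}+\sum_{ik}\alpha_{ik}^*\bar{A}_{ik}$, bounds $\lambda_{\mathrm{max}}(\bar{A})$ by the Gershgorin circle theorem, and evaluates the resulting bound by an auxiliary constrained maximization solved with Lagrange multipliers, so that $2\gamma I_m-\bar{A}\succ 0$ forces $U_l^*=V_l^*$. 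You instead work columnwise: subtracting the two stationarity equations so the residual terms telescope into $\bm{\delta}_i-\bm{\delta}_j$, then applying the maximum-column-norm argument and Cauchy--Schwarz with weights $(2,\dots,2,1,\dots,1)$, which yields exactly the constant $\sqrt{4|E_{\mathrm{ss}}[i]|+|E_{\mathrm{sa}}[i]|}$ and the contradiction with~(\ref{eq:gamma}). Your max-norm step is essentially the proof of Gershgorin's theorem carried out directly on the vector identity, so the two arguments are mathematically cognate; what your version buys is a more elementary and self-contained finish (no eigenvalue estimate, no auxiliary optimization problem~(\ref{prob:alpha2})), at the cost of handling $d$-dimensional column differences rather than scalar coordinates, which you do correctly via $\|\bm{\delta}_{i^*}-\bm{\delta}_j\|_2\le 2M$. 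One cosmetic remark: the degenerate case $M=0$ or $f(U^{(0)},V^{(0)})=0$ is covered by your contradiction argument since~(\ref{eq:gamma}) is strict, so no further care is needed.
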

\begin{proof}
For each $p\in \mathbb{N},\ ij\in E_{\rm ss}$, and $ik\in E_{\rm sa}$, let
\begin{align*}
\alpha_{ij}^{(p)} &\coloneqq A_{ij}\bullet \begin{pmatrix}
I_d\\
(U^{(p)})^\top
\end{pmatrix}\begin{pmatrix}
I_d\\
(V^{(p)})^\top
\end{pmatrix}^\top -d_{ij}^2 ,\\
\alpha_{ik}^{(p)} &\coloneqq A_{ik}\bullet\begin{pmatrix}
I_d\\
(U^{(p)})^\top
\end{pmatrix}\begin{pmatrix}
I_d\\
(V^{(p)})^\top
\end{pmatrix}^\top -d_{ik}^2 .
\end{align*}
Then, because the initial point satisfies $U^{(0)}=V^{(0)}$ and the value of the objective function $F$ decreases monotonically by Algorithm~\ref{alg:node-based}, we can conclude that for all $p\in\mathbb{N}$,
\begin{align}
f(U^{(0)},V^{(0)}) &= F(U^{(0)},V^{(0)};\gamma)\ge F(U^{(p)},V^{(p)};\gamma) \nonumber\\
&\ge \frac{1}{2}\sum_{ij\in E_{\mathrm{ss}}}(\alpha_{ij}^{(p)})^2 + \frac{1}{2}\sum_{ik\in E_{\mathrm{sa}}}(\alpha_{ik}^{(p)})^2. \label{eq:f_ineq}
\end{align}
By taking a subsequence, without loss of generality, we can assume that $\{(U^{(p)},V^{(p)})\}_{p=1}^\infty$ itself converges to $(U^*,V^*)$. Then, it follows from inequality~(\ref{eq:f_ineq}) that
\begin{equation}
\frac{1}{2}\sum_{ij\in E_{\mathrm{ss}}}(\alpha_{ij}^*)^2 + \frac{1}{2}\sum_{ik\in E_{\mathrm{sa}}}(\alpha_{ik}^*)^2 \le f(U^{(0)},V^{(0)}). \label{eq:alpha}
\end{equation}
By Theorem~\ref{thm:stationary2}, $(U^*,V^*)$ is a stationary point of $F$, from which we have
\begin{equation*}
\nabla_{(U,V)}F(U^*,V^*;\gamma) = \bm{O}.
\end{equation*}
Thus, let $U_l,\ V_l\in\mathbb{R}^m$ be the respective $l$th-column vectors of $U^\top,\ V^\top$ for each $l=1,\dots,d$; then, $\nabla_{U_l}F(U^*,V^*;\gamma) = \nabla_{V_l}F(U^*,V^*;\gamma) = \bm{0}$ holds. For each $ik\in E_{\mathrm{sa}}$ and $l=1,\dots,d$, let $\bm{b}_{ik}^l$ be an $m$-dimensional vector such that its $i$th component is $-a_{kl}$ and all other components are zeros. Furthermore, for each $ij\in E_{\mathrm{ss}}$ and $ik\in E_{\mathrm{sa}}$, let $\bar{A}_{ij}$ and $\bar{A}_{ik}$ respectively be
\begin{align*}
\bar{A}_{ij} \coloneqq (A_{ij})_{(d+1:d+m,d+1:d+m)},\ \bar{A}_{ik} &\coloneqq (A_{ik})_{(d+1:d+m,d+1:d+m)}.
\end{align*}
Using these symbols, we can represent $F(U,V;\gamma)$ as
\begin{align*}
F(U,V;\gamma) &= \frac{\gamma}{2}\sum_{l=1}^d\|U_l-V_l\|_2^2 + \frac{1}{2}\sum_{ij\in E_{\mathrm{ss}}}\left(\sum_{l=1}^dU_l^\top\bar{A}_{ij}V_l-d_{ij}^2\right)^2\\
&+ \frac{1}{2}\sum_{ik\in E_{\mathrm{sa}}}\left(\sum_{l=1}^dU_l^\top\bar{A}_{ik}V_l + \sum_{l=1}^d(\bm{b}_{ik}^l)^\top(U_l+V_l) + \bm{a}_k^\top\bm{a}_k -d_{ik}^2\right)^2.
\end{align*}
Because
\begin{align*}
\nabla_{U_l}F(U^*,V^*;\gamma) &= \gamma(U_l^*-V_l^*) + \sum_{ij\in E_{\mathrm{ss}}}\alpha_{ij}^*\bar{A}_{ij}V_l^* + \sum_{ik\in E_{\mathrm{sa}}}\alpha_{ik}^*(\bar{A}_{ik}V_l^*+\bm{b}_{ik}^l) \\
&= \bm{0},\\
\nabla_{V_l}F(U^*,V^*;\gamma) &= \gamma(V_l^*-U_l^*) + \sum_{ij\in E_{\mathrm{ss}}}\alpha_{ij}^*\bar{A}_{ij}U_l^* + \sum_{ik\in E_{\mathrm{sa}}}\alpha_{ik}^*(\bar{A}_{ik}U_l^*+\bm{b}_{ik}^l) \\
&= \bm{0}
\end{align*}
for all $l=1,\dots,d$, we obtain
\begin{align}
&(U_l^*-V_l^*)^\top\nabla_{U_l}F(U^*,V^*;\gamma) + (V_l^*-U_l^*)^\top\nabla_{V_l}F(U^*,V^*;\gamma) \nonumber\\
&=(U_l^*-V_l^*)^\top\left\{2\gamma I_m-\left(\sum_{ij\in E_{\mathrm{ss}}}\alpha_{ij}^*\bar{A}_{ij}+\sum_{ik\in E_{\mathrm{sa}}}\alpha_{ik}^*\bar{A}_{ik}\right)\right\}(U_l^*-V_l^*) = 0. \label{eq:positive_definite}
\end{align}
For convenience, let
\begin{equation*}
\bar{A} \coloneqq \sum_{ij\in E_{\mathrm{ss}}}\alpha_{ij}^*\bar{A}_{ij}+\sum_{ik\in E_{\mathrm{sa}}}\alpha_{ik}^*\bar{A}_{ik}.
\end{equation*}
Next, we seek to prove the following inequality:
\begin{equation}
\lambda_{\mathrm{max}}(\bar{A}) \le \sqrt{2f(U^{(0)},V^{(0)})}\max_{1\le i\le m}\sqrt{4|E_{\mathrm{ss}}[i]|+|E_{\mathrm{sa}}[i]|}. \label{eq:gamma2}
\end{equation}
In fact, if inequality~(\ref{eq:gamma2}) can be shown, then because $\gamma$ satisfies inequality~(\ref{eq:gamma}), $2\gamma I_m-\bar{A}$ is a positive definite matrix, and thus, $U_l^*=V_l^*$ from equality~(\ref{eq:positive_definite}). Because of the arbitrariness of $l$, we can eventually conclude that $U^* = V^*$. Therefore, we need only prove inequality~(\ref{eq:gamma2}).

It follows from the Gershgorin circle theorem that
\begin{equation}
\lambda_{\mathrm{max}}(\bar{A}) \le \max_{1\le i\le m}\left\{\sum_{j\in E_{\mathrm{ss}}[i]}\alpha_{ij}^* + \sum_{k\in E_{\mathrm{sa}}[i]}\alpha_{ik}^* + \sum_{j\in E_{\mathrm{ss}}[i]}|\alpha_{ij}^*|\right\}. \label{eq:Gershgorin}
\end{equation}
For each $i=1,\dots,m$, let $v(i)$ be the optimal value of the following optimization problem:\footnote{Although we use the notations $``ij"$ and $``ik"$ to denote the indices of the variables $\alpha$ in the sums in the constraint of this optimization problem, they are not related to $i$ ($=1,\dots,m$), which is fixed here.}
\begin{equation*}
\vline\quad\begin{aligned}
&\text{max}&&\sum_{j\in E_{\mathrm{ss}}[i]}\alpha_{ij} + \sum_{k\in E_{\mathrm{sa}}[i]}\alpha_{ik} + \sum_{j\in E_{\mathrm{ss}}[i]}|\alpha_{ij}|\\
&\text{s.t.}&&\frac{1}{2}\sum_{ij\in E_{\mathrm{ss}}}\alpha_{ij}^2 + \frac{1}{2}\sum_{ik\in E_{\mathrm{sa}}}\alpha_{ik}^2 \le f(U^{(0)},V^{(0)}),
\end{aligned}\label{prob:alpha}
\end{equation*}
where the right side of inequality~(\ref{eq:Gershgorin}) does not exceed $\max_{1\le i\le m}v(i)$ because of inequality~(\ref{eq:alpha}). We can easily check that $v(i)$ is equal to the optimal value of the following optimization problem for each $i=1,\dots,m$:
\begin{equation}
\vline\quad\begin{aligned}
&\text{max}&&2\sum_{j\in E_{\mathrm{ss}}[i]}\alpha_{ij} + \sum_{k\in E_{\mathrm{sa}}[i]}\alpha_{ik}\\
&\text{s.t.}&&\frac{1}{2}\sum_{j\in E_{\mathrm{ss}}[i]}\alpha_{ij}^2 + \frac{1}{2}\sum_{k\in E_{\mathrm{sa}}[i]}\alpha_{ik}^2=f(U^{(0)},V^{(0)}).
\end{aligned}\label{prob:alpha2}
\end{equation}
Using the method of Lagrange multipliers, we can see that the optimal value of problem~(\ref{prob:alpha2}) is $\sqrt{2f(U^{(0)},V^{(0)})}\sqrt{4|E_{\mathrm{ss}}[i]|+|E_{\mathrm{sa}}[i]|}$. Therefore,
\begin{align*}
&\max_{1\le i\le m}\left\{\sum_{j\in E_{\mathrm{ss}}[i]}\alpha_{ij}^* + \sum_{k\in E_{\mathrm{sa}}[i]}\alpha_{ik}^* + \sum_{j\in E_{\mathrm{ss}}[i]}|\alpha_{ij}^*|\right\} \\
&\quad\le \max_{1\le i\le m}v(i) = \sqrt{2f(U^{(0)},V^{(0)})}\max_{1\le i\le m}\sqrt{4|E_{\mathrm{ss}}[i]|+|E_{\mathrm{sa}}[i]|},
\end{align*}
which implies inequality~(\ref{eq:gamma2}). \qed
\end{proof}

\subsection{Relationship to the augmented Lagrangian}
\label{subsec:Augmented Lagrangian}
In this paper, we adopt the quadratic-penalty-based method, in which the equality constraint $U - V = \bm{O}$ is incorporated in the objective function as a quadratic penalty term and the resulting new objective function is minimized. On the other hand, there are also methods such as the augmented Lagrangian method~\cite{Nocedal2006} and the alternating direction method of multipliers~\cite{Boyd2010} that minimize the augmented Lagrangian, which contains not only the quadratic penalty term but also the Lagrange multiplier term. It is known that the augmented Lagrangian method is more efficient than the quadratic penalty method. For example, while the quadratic penalty method requires the penalty parameter to diverge to positive infinity, the augmented Lagrangian method does not require it to diverge, and the sequence obtained by the augmented Lagrangian method converges faster than that obtained by the quadratic penalty method~\cite[Example~17.4]{Nocedal2006}. Hence, we explain that problem~(\ref{prob:biconvex2}) can be regarded as a minimization problem of the augmented Lagrangian with an exact Lagrangian multiplier.

$\Lambda = \bm{O}$ is the exact Lagrange multiplier of problem~(\ref{prob:rankSDP_error3}). In fact, for all local optimum solutions $(U^*,V^*)$ of problem~(\ref{prob:rankSDP_error3}), because problem~(\ref{prob:rankSDP_error3}) satisfies the linear independence constraint qualification, there exists a Lagrange multiplier $\Lambda^*\in\mathbb{R}^{d\times m}$ satisfying the Karush--Kuhn--Tucker condition. In other words, if we let the Lagrangian for problem~(\ref{prob:rankSDP_error3}) be
\begin{equation*}
\mathcal{L}(U,V,\Lambda) \coloneqq f(U,V)-\Lambda\bullet(U-V),
\end{equation*}
then
\begin{subequations}
\begin{empheq}[left=\empheqlbrace]{align}
&\nabla_{U}\mathcal{L}(U^*,V^*,\Lambda^*) = \nabla_{U}f(U^*,V^*) - \Lambda^* = \bm{O}, \label{eq:nablaU}\\
&\nabla_{V}\mathcal{L}(U^*,V^*,\Lambda^*) = \nabla_{V}f(U^*,V^*) + \Lambda^* = \bm{O}, \label{eq:nablaV}\\
&\nabla_{\Lambda}\mathcal{L}(U^*,V^*,\Lambda^*) = -(U^* - V^*) = \bm{O} \label{eq:nablaLambda}
\end{empheq}
\end{subequations}
hold. We get $U^* = V^*$ from equation~(\ref{eq:nablaLambda}) and denote both of them as $W^*$. Because $f(U,V) = f(V,U)\ (\forall U,\ V\in\mathbb{R}^{d\times m})$, $\nabla_{U}f(W^*,W^*) = \nabla_{V}f(W^*,W^*)$. Using this equation and equations (\ref{eq:nablaU}) and (\ref{eq:nablaV}), we obtain $\Lambda^* = \bm{O}$. Therefore, the augmented Lagrangian with the Lagrange multiplier $\Lambda = \bm{O}$ and penalty parameter $\gamma$ is
\begin{equation*}
f(U,V) - \bm{O}\bullet (U-V) + \frac{\gamma}{2}\|U-V\|_F^2 = f(U,V) + \frac{\gamma}{2}\|U-V\|_F^2,
\end{equation*}
which is the definition of $F(U,V;\gamma)$ itself. Hence, problem~(\ref{prob:biconvex2}) can be regarded as a minimization problem of the augmented Lagrangian with the exact Lagrange multiplier $\Lambda = \bm{O}$ for problem~(\ref{prob:rankSDP_error3}).

\section{Numerical experiments}
\label{sec:experiment}
In this section, we use numerical simulation to verify the advantages (\ref{adv:1}) and (\ref{adv:2}) described in Subsection~\ref{subsec:algorithm} for the proposed method. We begin by confirming that our method does inherit the rank constraint; to confirm this, we compare it with an SDP relaxation-based method for a problem that is locatable but not uniquely localizable. Next, to confirm the effectiveness of the proposed method, we compare its estimation time and estimation accuracy with those of other methods by using artificial data under various conditions. All experiments were conducted on a computer with the macOS Catalina operating system, an Intel Core i5-8279U 2.40~GHz CPU, and 16~GB of memory. All the algorithms were implemented using MATLAB (R2020a). The parameter $\epsilon$ in Algorithm~\ref{alg:node-based} was set to $10^{-5}$ throughout the experiments.

\subsection{Comparison with SFSDP for a problem that is not uniquely localizable}
\label{subsec:uniquely localizable}
In this subsection, we demonstrate that the proposed method has the capability to estimate sensor positions accurately for a problem that is locatable but not uniquely localizable. Specifically, we examine a problem from \cite{So2007}:
\begin{gather*}
\|\bm{x}_1-\bm{x}_2\|_2 = \sqrt{10}/5,\ \|\bm{x}_1-\bm{a}_4\|_2 = \sqrt{5}/2,\ \|\bm{x}_1-\bm{a}_5\|_2 = \sqrt{5}/2,\\
\|\bm{x}_2-\bm{a}_3\|_2 = \sqrt{85}/10,\ \|\bm{x}_2-\bm{a}_5\|_2 = \sqrt{65}/10,
\end{gather*}
where $\bm{a}_3 = (0, 1.4)^\top$, $\bm{a}_4 = (-1, 0)^\top$, and $\bm{a}_5 = (1, 0)^\top$, and the true positions of the two sensors are $\bm{x}_1^{\rm true}=(0,0.5)^\top,\ \bm{x}_2^{\rm true}=(0.6,0.7)^\top$. For this problem, Algorithm~\ref{alg:node-based} was executed after fixing the penalty parameter $\gamma$ as $\sqrt{2f(U^{(0)},V^{(0)})}\max_{1\le i\le m}\sqrt{4|E_{\rm ss}| + |E_{\rm sa}|}/2$ according to Theorem~\ref{thm:gamma}. We examined the two cases of whether the initial points $\bm{u}_i^{(0)}\ (= \bm{v}_i^{(0)}) \in \mathbb{R}^2\ (i=1,2)$ in Algorithm~\ref{alg:node-based} are in the interior or the exterior of the convex hull of the three anchors. Figure~\ref{fig:Experiment6} shows the sensor positions estimated by SFSDP, the SDP relaxation-based method described in Section~\ref{sec:introduction}, and the proposed method. Note that when we estimated the sensor positions with the proposed method, the randomness of the initial points was varied 10 times. The results were similar to those of Figure~\ref{fig:in conv} in all cases in which the initial points were in the interior of the convex hull of the anchors. On the other hand, the results were similar to those of either Figure~\ref{fig:not in conv_5} or Figure~\ref{fig:not in conv_1} in all cases in which the initial points were in the exterior of the convex hull. Accordingly, only these three cases are included in Figure~\ref{fig:Experiment6}.
\begin{figure}[tb]
\centering
\subfloat[SFSDP]{\includegraphics[width=0.5\linewidth]{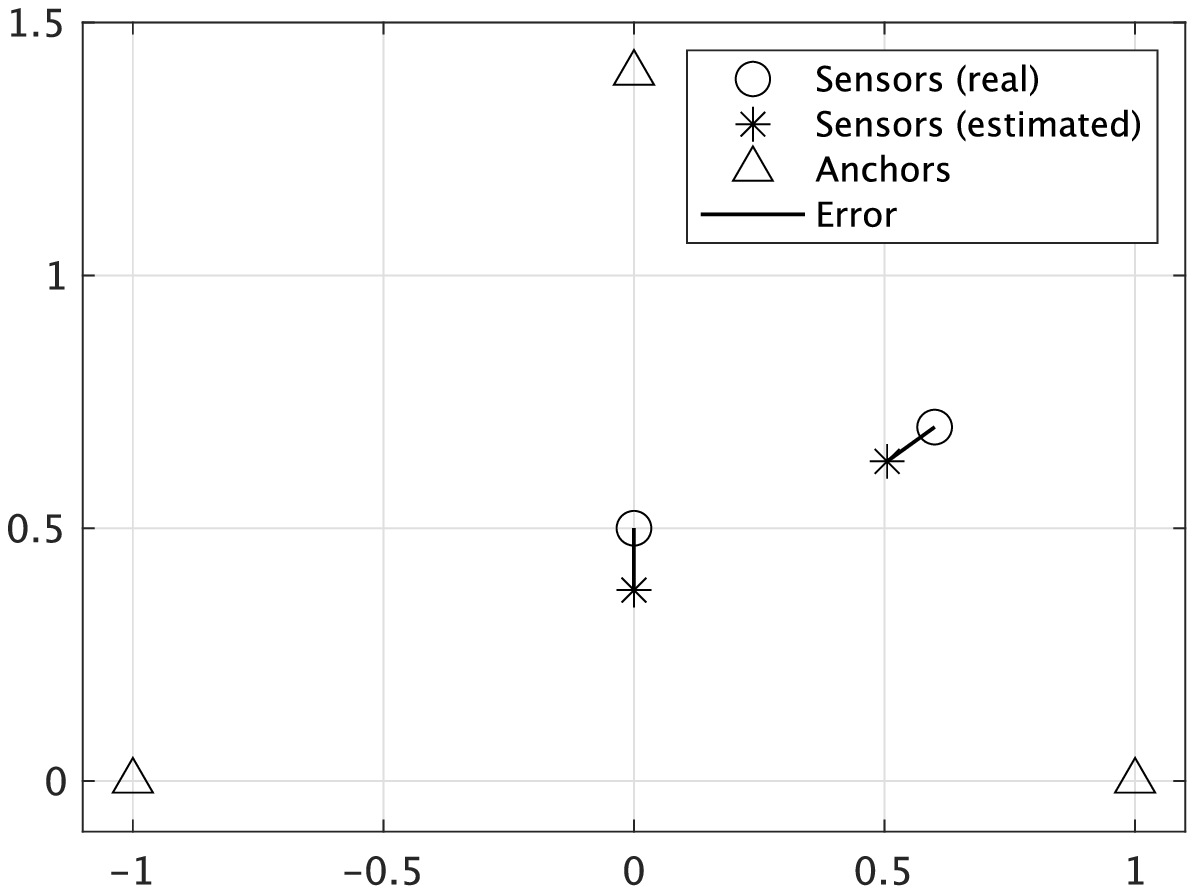}\label{fig:Experiment6_SFSDP}}
\subfloat[Initial points in the interior of the convex hull]{\includegraphics[width=0.5\linewidth]{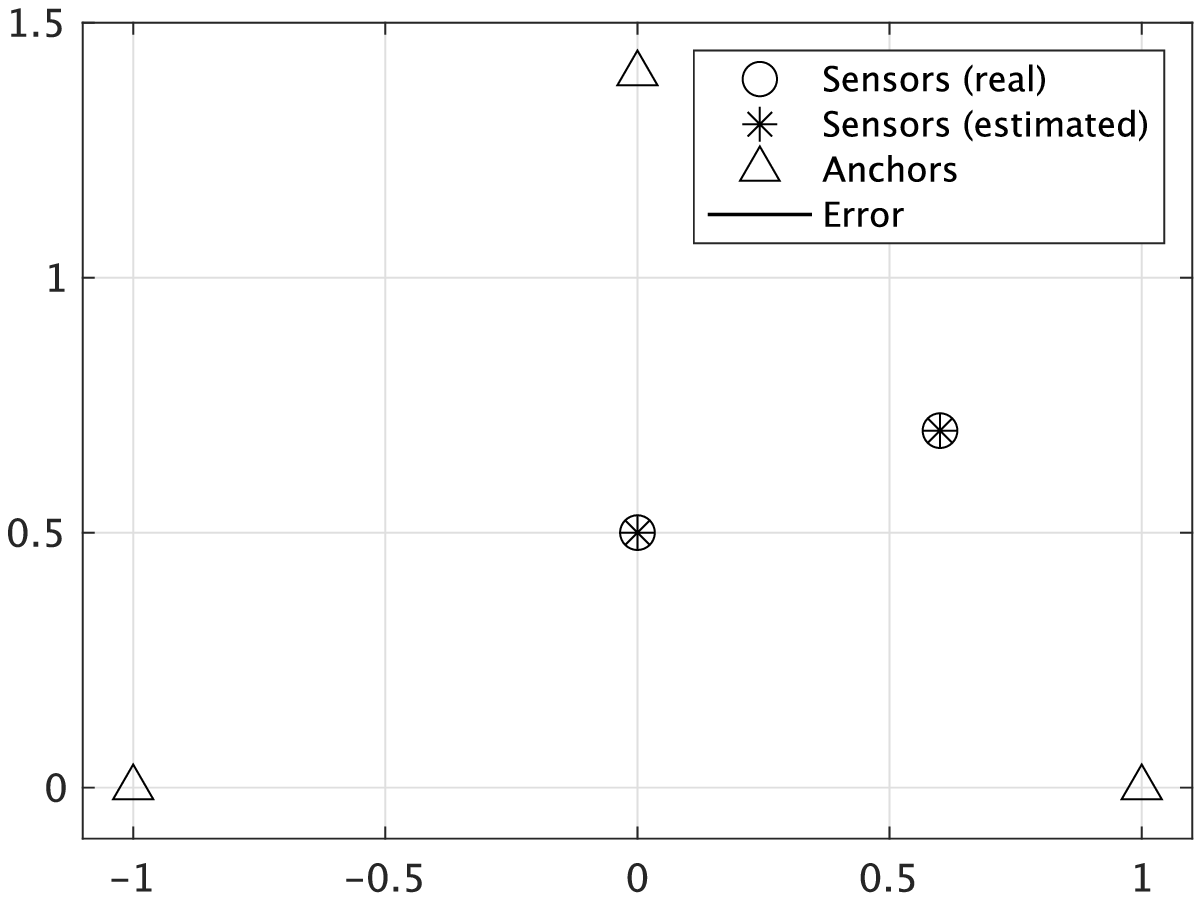}\label{fig:in conv}}\\
\subfloat[Initial points in the exterior of the convex hull (case 1)]{\includegraphics[width=0.5\linewidth]{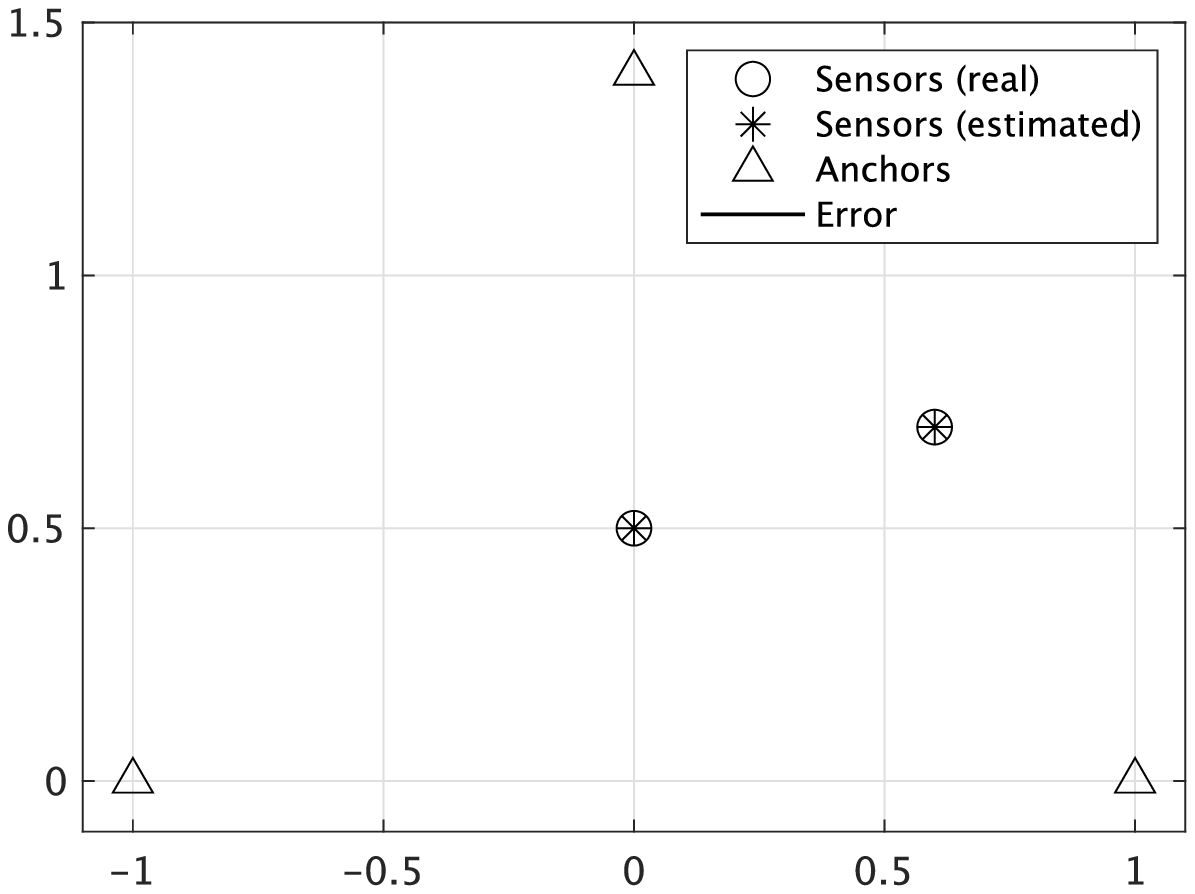}\label{fig:not in conv_5}}
\subfloat[Initial points in the exterior of the convex hull (case 2)]{\includegraphics[width=0.5\linewidth]{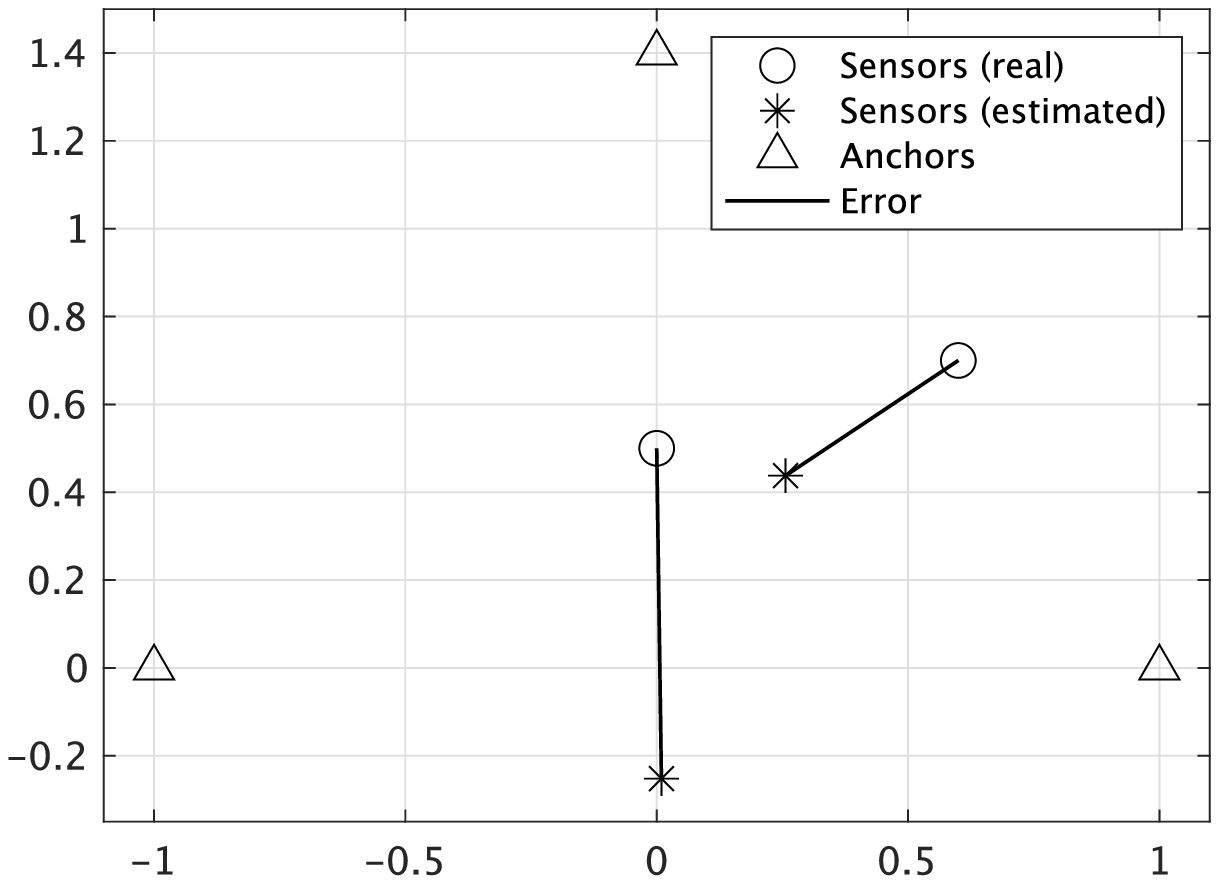}\label{fig:not in conv_1}}
\caption{Estimated sensor positions when the locatable but non-uniquely localizable problem was solved by \protect\subref{fig:Experiment6_SFSDP} SFSDP and \protect\subref{fig:in conv}--\protect\subref{fig:not in conv_1} the proposed method.}\label{fig:Experiment6}
\end{figure}

When we used SFSDP, the sensor positions were not estimated correctly (Figure~\ref{fig:Experiment6_SFSDP}). For the proposed method, the estimation accuracy depended on the initial points. When the initial points were in the interior of the convex hull of the anchors, the sensor positions were estimated accurately (Figure~\ref{fig:in conv}). On the other hand, when the initial points were in the exterior of the convex hull, the sensor positions were estimated accurately in some cases (Figure~\ref{fig:not in conv_5}) but not in others (Figure~\ref{fig:not in conv_1}).

Of course, because problem~(\ref{prob:biconvex2}) examined in this paper is a nonconvex optimization problem, whether the sensor positions can be estimated accurately depends on the initial points. However, as shown in Figure~\ref{fig:in conv} and Figure~\ref{fig:not in conv_5}, the proposed method still has the capability to estimate sensor positions accurately even for problems that are not uniquely localizable, although the example here is quite simple. On the other hand, when we use SDP relaxation-based methods, if a given problem is not uniquely localizable, there is no capability for accurate estimation because of the max-rank property of the interior-point method, as described in Section~\ref{sec:introduction}. Therefore, we can say that the proposed method does inherit the rank constraint.

\subsection{Comparison of estimation time and accuracy}
\label{subsec:2dim}
In this subsection, we quantitatively compare the estimation time and the estimation accuracy of the proposed method with those of existing methods for sensors located in two- or three- dimensional space. The compared methods are SFSDP, which was also used in Subsection~\ref{subsec:uniquely localizable}, and NLP-FD, which takes the rank constraint into account as our proposed method does. Although we introduced the methods proposed by Wan \textit{et al.} \cite{Wan2020,Wan2019} in Section~\ref{sec:introduction}, which also account for the rank constraint, we do not compare them here because of their extremely low scalability. In these experiments, $m = 1000,\ 3000,\ 5000$, and $20000$ sensors and $n = 0.1m$ anchors were placed randomly in $[0, 1]^d$. $E_{\mathrm{ss}}$ and $E_{\mathrm{sa}}$ were defined as
\begin{align*}
E_{\mathrm{ss}} &\coloneqq \{ij\mid 1\le i<j\le m,\ \|\bm{x}_i^{\mathrm{true}}-\bm{x}_j^{\mathrm{true}}\|_2 < \rho\},\\
E_{\mathrm{sa}} &\coloneqq \{ik\mid 1\le i\le m,\ m+1\le k\le m+n,\ \|\bm{x}_i^{\mathrm{true}}-\bm{a}_k\|_2 < \rho\},
\end{align*}
where the $\bm{x}_i^{\mathrm{true}}\ (i=1,...,m)$ are the sensors' true positions. In other words, we considered a model in which the distance between two sensors or between a sensor and an anchor is observed if and only if it is less than a radio range $\rho\ (>0)$. We set $\rho$ to $0.1$ and $\sqrt{10/m}$ in the case of $d=2$ and to $0.25$ and $\sqrt[3]{15/m}$ in the case of $d=3$. The measured distances $d_{ij}\ (ij\in E_{\mathrm{ss}})$ and $d_{ik}\ (ik\in E_{\mathrm{sa}})$ were given by
\begin{align*}
d_{ij} &= \max\{(1+\sigma \epsilon_{ij}),0.1\}\|\bm{x}_i^{\mathrm{true}}-\bm{x}_{j}^{\mathrm{true}}\|_2,\\
d_{ik} &= \max\{(1+\sigma \epsilon_{ik}),0.1\}\|\bm{x}_i^{\mathrm{true}}-\bm{a}_{k}\|_2,\\
\end{align*}
where $\epsilon_{ij},\ \epsilon_{ik}$ were selected independently from the standard normal distribution, and $\sigma$ is a noise factor determining the influence of the error. $\sigma$ was set to $0,\ 0.1$, and 0.2. As an indicator to measure the estimation accuracy, we used the root-mean-square distance (RMSD), which has been used in many other papers on SNL \cite{Chang2017,Kim2009,Wan2020,Wang2008} and is defined as
\begin{equation*}
\mathrm{RMSD} \coloneqq \sqrt{\frac{1}{m}\sum_{i=1}^m\|\hat{\bm{x}}_i-\bm{x}_i^{\mathrm{true}}\|_2^2},
\end{equation*}
where $\hat{\bm{x}}_i$ is the estimated position of sensor $i$. For each set of $(m,n,\rho,\sigma)$, five different problems of varying randomness were created, and the final results were the averages of five measurements of the estimation time (CPU time) and the estimation accuracy (RMSD).

The initial point $(U^{(0)},V^{(0)})\in\mathbb{R}^{d\times m}\times\mathbb{R}^{d\times m}$ in Algorithm~\ref{alg:node-based} was decided similarly to the method in \cite{Chang2017}. That is, for each sensor $i\ (i=1,\dots,m)$, if it was connected directly to an anchor, then $\bm{u}_i^{(0)}$ and $\bm{v}_i^{(0)}$ were set to the coordinates of the anchor nearest to sensor $i$; otherwise, $\bm{u}_i^{(0)}$ and $\bm{v}_i^{(0)}$ were set to
\begin{equation*}
\frac{1}{2}\left(\begin{pmatrix}
\max_{k}a_{k1}\\
\vdots\\
\max_{k}a_{kd}
\end{pmatrix} + \begin{pmatrix}
\min_{k}a_{k1}\\
\vdots\\
\min_{k}a_{kd}
\end{pmatrix}\right).
\end{equation*}
The penalty parameter $\gamma$ was updated dynamically according to Theorem~\ref{thm:gamma} by the following procedure.
\begin{enumerate}[Step~1.]
\item Let
$$\gamma^{(0)} = 5 \times 10^{-3} \times \sqrt{2f(U^{(0)},V^{(0)})}\max_{1\le i\le m}\sqrt{4|E_{\mathrm{ss}}[i]|+|E_{\mathrm{sa}}[i]|}/2.$$
By using $\gamma^{(0)}$ as the penalty parameter, $(U^{(1)},V^{(1)})$ is calculated by the update rule in the \textbf{while} loop of Algorithm~\ref{alg:node-based}. Let $\gamma^{(1)} = \gamma^{(0)}/2$. Then, by using $\gamma^{(1)}$ as the penalty parameter, $(U^{(2)},V^{(2)})$ is also calculated by this update rule. Let $p=2$. \label{step:oldstart}\\
\item If
\begin{align*}
&\frac{f(U^{(p-1)},V^{(p-1)})-f(U^{(p)},V^{(p)})}{f(U^{(p-1)},V^{(p-1)})}\\
&\quad\ge \frac{f(U^{(p-2)},V^{(p-2)})-f(U^{(p-1)},V^{(p-1)})}{f(U^{(p-2)},V^{(p-2)})},
\end{align*}
then $\gamma^{(p)} = (\gamma^{(p-1)}/\gamma^{(p-2)})\gamma^{(p-1)}$; otherwise, $\gamma^{(p)} = \gamma^{(p-2)}$. By using $\gamma^{(p)}$ as the penalty parameter, $(U^{(p+1)},V^{(p+1)})$ is calculated by the update rule in the \textbf{while} loop of Algorithm~\ref{alg:node-based}. \label{step:gammaold}\\
\item If $|f(U^{(p)},V^{(p)})-f(U^{(p+1)},V^{(p+1)})|/f(U^{(p)},V^{(p)}) < 10^{-2}$ or the overall stopping criterion (line 8 of Algorithm~\ref{alg:node-based}) is satisfied, then go to Step~\ref{step:iteration};\footnote{Even if the overall stopping criterion is satisfied at this stage, the entire algorithm does not end but always proceeds to Step~\ref{step:iteration}.} otherwise, set $p=p+1$ and go to Step~\ref{step:gammaold}. \label{step:oldend}\\
\item Let $W\coloneqq (U^{(p)}+V^{(p)})/2$ and
\begin{equation*}
\gamma = \sqrt{2f(W,W)}\max_{1\le i\le m}\sqrt{4|E_{\mathrm{ss}}[i]|+|E_{\mathrm{sa}}[i]|}/2.
\end{equation*}
Restart Algorithm~\ref{alg:node-based} with $W$ as the initial point and $\gamma$ as the penalty parameter. \label{step:iteration}
\end{enumerate}
The method of updating $\gamma$ described above consists of two components. First, in Steps~\ref{step:oldstart}--\ref{step:oldend}, $\gamma$ is updated so that the value of $f$, which represents the squared error of the squared distances, decreases rather than the penalty term $\|U-V\|_F$. However, if we keep reducing the value of $f$ rather than the penalty term, then the penalty term does not decrease much, and $U$ and $V$ may end up taking very different values from each other. Therefore, in Step~\ref{step:iteration}, we try to reduce the difference between $U$ and $V$ by fixing $\gamma$ according to Theorem~\ref{thm:gamma}.
\begin{table}[tb]
\centering
\caption{Results of numerical experiments for sensors and anchors placed randomly in $[0, 1]^2$. For each parameter combination, the lowest CPU time and shortest root-mean-square distance (RMSD) are in bold.}
\scalebox{0.89}{\begin{tabular}{|l|l|l||r|r|r|r|r|r|}
\hline
 & & & \multicolumn{3}{c|}{CPU time} & \multicolumn{3}{c|}{RMSD} \\
 & $\rho$ & $\sigma$ & BCD & SFSDP & NLP-FD & BCD & SFSDP & NLP-FD \\
\hline
$m=1000,$ & 0.1 & 0 & \textbf{4.1} & 4.2 & 14.6 & 3.13e-02 & \textbf{1.18e-05} & 1.27e-02 \\
\cline{3-9}
$n=100$ & & 0.1 & \textbf{4.2} & 13.7 & 20.1 & 3.38e-02 & 1.92e-02 & \textbf{1.37e-02} \\
\cline{3-9}
 & & 0.2 & \textbf{8.8} & 14.5 & 33.6 & 3.74e-02 & 2.94e-02 & \textbf{1.87e-02} \\
\hline
$m=3000,$ & 0.1 & 0 & 12.8 & \textbf{12.7} & 44.0 & 4.02e-04 & \textbf{4.64e-07} & 7.87e-04 \\
\cline{3-9}
$n=300$ & & 0.1 & \textbf{1.7} & 77.7 & 81.9 & \textbf{2.78e-03} & 8.78e-03 & 3.18e-03 \\
\cline{3-9}
 & & 0.2 & \textbf{2.0} & 63.8 & 120.3 & 6.96e-03 & 1.72e-02 & \textbf{6.02e-03} \\
\cline{2-9}
 & $\sqrt{10/m}$ & 0 & \textbf{11.2} & 15.3 & 38.0 & 1.28e-02 & \textbf{1.51e-05} & 4.03e-03 \\
\cline{3-9}
 & & 0.1 & \textbf{4.9} & 85.9 & 49.6 & 1.53e-02 & 8.47e-03 & \textbf{5.07e-03} \\
\cline{3-9}
 & & 0.2 & \textbf{4.1} & 91.3 & 52.5 & 1.83e-02 & 1.39e-02 & \textbf{7.03e-03} \\
\hline
$m=5000,$ & 0.1 & 0 & \textbf{18.3} & 23.2 & 54.3 & \textbf{8.22e-08} & 2.00e-07 & 4.75e-04 \\
\cline{3-9}
$n=500$ & & 0.1 & \textbf{2.5} & 184.8 & 162.0 & \textbf{1.90e-03} & 8.26e-03 & 2.17e-03 \\
\cline{3-9}
 & & 0.2 & \textbf{2.2} & 135.6 & 223.6 & \textbf{3.74e-03} & 1.62e-02 & 4.19e-03 \\
\cline{2-9}
 & $\sqrt{10/m}$ & 0 & \textbf{22.0} & 30.2 & 57.6 & 9.01e-03 & \textbf{5.26e-05} & 5.81e-03 \\
\cline{3-9}
 & & 0.1 & \textbf{11.4} & 198.7 & 70.4 & 1.17e-02 & 6.56e-03 & \textbf{5.29e-03} \\
\cline{3-9}
 & & 0.2 & \textbf{9.4} & 160.0 & 87.8 & 1.43e-02 & 1.04e-02 & \textbf{5.96e-03} \\
\hline
$m=20000,$ & 0.1 & 0 & \textbf{82.7} & 176.2 & 718.2 & 5.81e-06 & \textbf{6.50e-07} & 3.23e-04 \\
\cline{3-9}
$n=2000$ & & 0.1 & \textbf{29.2} & 643.5 & 1300.8 & \textbf{9.14e-04} & 8.33e-03 & 9.54e-04 \\
\cline{3-9}
 & & 0.2 & \textbf{26.0} & 544.9 & 1450.2 & 2.22e-03 & 1.76e-01 & \textbf{1.79e-03} \\
\cline{2-9}
 & $\sqrt{10/m}$ & 0 & 223.0 & 357.9 & \textbf{213.8} & 3.35e-03 & \textbf{6.73e-05} & 2.25e-03 \\
\cline{3-9}
 & & 0.1 & \textbf{129.2} & 1527.9 & 215.2 & 4.16e-03 & 3.10e-03 & \textbf{2.30e-03} \\
\cline{3-9}
 & & 0.2 & \textbf{110.6} & 1558.2 & 240.5 & 5.47e-03 & 4.86e-03 & \textbf{2.63e-03}\\
\hline
\end{tabular}}\label{tab:Experiment7_2dim}
\end{table}

First, the results for $d = 2$ are given in Table~\ref{tab:Experiment7_2dim},\footnote{For $m=1000$, only the case of $\rho=0.1$ is shown because $\sqrt{10/m}=0.1$.} wherein the proposed method is referred to as ``BCD." We can see that when the measured distances included no errors ($\sigma = 0$), the estimation time of the proposed method was the lowest in most cases; furthermore, even when the proposed method was not the fastest, its estimation time was almost the same as that of the fastest method. In terms of the estimation accuracy, SFSDP estimated the sensor positions with the best accuracy of all the methods, by an order of magnitude. However, comparing the proposed method and NLP-FD shows that there was no appreciable difference between their estimation accuracies. When the measured distances included errors ($\sigma = 0.1$ and 0.2), the proposed method estimated the sensor positions the most rapidly of all the methods, by an order of magnitude in all cases, and the estimation accuracy was about the same as those of the other two methods.
\begin{table}[tb]
\centering
\caption{Results of numerical experiments for sensors and anchors placed randomly in $[0, 1]^3$. For each parameter combination, the lowest CPU time and shortest RMSD are in bold.}
\scalebox{0.89}{\begin{tabular}{|l|l|l||r|r|r|r|r|r|}
\hline
 & & & \multicolumn{3}{c|}{CPU time} & \multicolumn{3}{c|}{RMSD} \\
 & $\rho$ & $\sigma$ & BCD & SFSDP & NLP-FD & BCD & SFSDP & NLP-FD \\
\hline
$m=1000,$ & 0.1 & 0 & \textbf{5.3} & 11.5 & 34.6 & 8.19e-03 & \textbf{2.79e-05} & 7.62e-03 \\
\cline{3-9}
$n=100$ & & 0.1 & \textbf{18.5} & 25.8 & 32.5 & 3.21e-02 & 5.50e-02 & \textbf{2.77e-02} \\
\cline{3-9}
 & & 0.2 & 29.4 & \textbf{25.4} & 42.4 & \textbf{4.71e-02} & 8.18e-02 & 5.28e-02 \\
\hline
$m=3000,$ & 0.1 & 0 & \textbf{8.6} & 24.7 & 62.3 & 3.97e-05 & \textbf{1.22e-06} & 1.06e-03 \\
\cline{3-9}
$n=300$ & & 0.1 & \textbf{3.3} & 100.6 & 132.0 & \textbf{7.01e-03} & 3.11e-02 & 1.09e-02 \\
\cline{3-9}
 & & 0.2 & \textbf{3.6} & 102.0 & 194.2 & \textbf{1.47e-02} & 5.79e-02 & 2.45e-02 \\
\cline{2-9}
 & $\sqrt[3]{15/m}$ & 0 & \textbf{35.6} & 61.6 & 83.1 & 2.12e-02 & \textbf{6.25e-05} & 4.26e-03 \\
\cline{3-9}
 & & 0.1 & \textbf{26.8} & 185.7 & 102.5 & 3.29e-02 & 3.22e-02 & \textbf{1.48e-02} \\
\cline{3-9}
 & & 0.2 & \textbf{69.5} & 158.6 & 113.8 & 4.32e-02 & 5.12e-02 & \textbf{2.75e-02} \\
\hline
$m=5000,$ & 0.1 & 0 & \textbf{17.5} & 47.5 & 126.0 & 5.11e-05 & \textbf{4.41e-07} & 1.09e-03 \\
\cline{3-9}
$n=500$ & & 0.1 & \textbf{5.5} & 195.5 & 291.2 & \textbf{5.50e-03} & 2.94e-02 & 7.95e-03 \\
\cline{3-9}
 & & 0.2 & \textbf{24.0} & 170.3 & 284.9 & \textbf{1.19e-02} & 5.35e-02 & 1.38e-02 \\
\cline{2-9}
 & $\sqrt[3]{15/m}$ & 0 & \textbf{30.4} & 199.9 & 114.0 & 1.11e-02 & \textbf{2.13e-04} & 7.15e-03 \\
\cline{3-9}
 & & 0.1 & \textbf{11.9} & 554.8 & 182.8 & 2.43e-02 & 2.51e-02 & \textbf{1.18e-02} \\
\cline{3-9}
 & & 0.2 & \textbf{66.5} & 522.3 & 164.7 & 3.76e-02 & 4.04e-02 & \textbf{2.08e-02} \\
\hline
$m=20000,$ & 0.1 & 0 & \textbf{128.9} & 288.6 & 1813.7 & 5.53e-05 & \textbf{4.24e-07} & 6.26e-04 \\
\cline{3-9}
$n=2000$ & & 0.1 & \textbf{56.9} & 1271.6 & 2551.9 & \textbf{2.81e-03} & 2.93e-02 & 3.37e-03 \\
\cline{3-9}
 & & 0.2 & \textbf{56.9} & 1028.0 & 2679.0 & 7.59e-03 & 2.43e-01 & \textbf{5.61e-03} \\
\cline{2-9}
 & $\sqrt[3]{15/m}$ & 0 & \textbf{304.4} & OOM & 341.3 & \textbf{3.93e-03} & OOM & 4.00e-03 \\
\cline{3-9}
 & & 0.1 & \textbf{149.9} & OOM & 606.7 & 1.18e-02 & OOM & \textbf{6.08e-03} \\
\cline{3-9}
 & & 0.2 & \textbf{107.5} & OOM & 628.1 & 2.01e-02 & OOM & \textbf{1.06e-02}\\
\hline
\end{tabular}}\label{tab:Experiment7_3dim}
\end{table}

Next, the results for $d = 3$ are given in Table~\ref{tab:Experiment7_3dim}.\footnote{For $m=1000$, the case of $\rho=\sqrt[3]{15/m}$ is omitted because $\sqrt[3]{15/m}=0.24\approx 0.25$. An entry of ``OOM" means that we could not estimate the sensor positions because of insufficient memory (i.e., ``out of memory'').} The results for the three-dimensional scenario were similar to those for the two-dimensional scenario; that is, when the measured distances did not include errors, the estimation time of the proposed method was the lowest in each case. In terms of the estimation accuracy, SFSDP estimated the sensor positions with the highest accuracy of all the methods, by an order of magnitude; however, comparing the proposed method and NLP-FD again shows that there was no appreciable difference between their estimation accuracies. When the measured distances included errors, the estimation time of the proposed method was the lowest in all cases except $(m, n, \rho, \sigma) = (1000, 100, 0.1, 0.2)$, and even in that case, its estimation time was also almost the same as that of the fastest method (SFSDP). The estimation accuracy of the proposed method was also comparable to those of the other two methods. In addition, for $m=20000$, SFSDP could not estimate the sensor positions because of insufficient memory, while the proposed method could estimate the positions without running out of memory.

Overall, these results for the two- and three-dimensional cases show that the proposed method has practical advantages over the other methods: it can estimate sensor positions faster than those methods can without sacrificing the estimation accuracy, especially when measurement errors are included, and it does not run out of memory even for large-scale SNL problems.

It is interesting to consider why the proposed method and NLP-FD, both of which account for the rank constraint, could not estimate the sensor positions with as much accuracy as SFSDP when there were no measurement errors. The reason was probably because a formulation that accounts for the rank constraint is a nonconvex optimization problem and thus might converge to a stationary point that is not a global optimal solution. In contrast, if a problem is uniquely localizable, then SFSDP can estimate accurate sensor positions because the convergence to the global optimum of a relaxation problem is guaranteed. On the other hand, when measurement errors are included, even if a global minimum solution of the objective function $f$ is obtained and the optimal value is zero, it does not mean that the true sensor positions are estimated, but rather that the positions are estimated incorrectly. In other words, even if the objective function is strictly minimized, it does not necessarily mean that a good estimate of the sensor positions is obtained; thus, when measurement errors are included, estimation accuracy comparable to that of SFSDP can be obtained even with methods that account for the rank constraint and may cause the generated sequence to fall into a stationary point that is not a global optimal solution.

\section{Conclusion}
\label{sec:conclusion}
In this paper, we proposed a new method that transforms the formulation of problem (\ref{prob:rankSDP_error}), which appears in SNL, into an unconstrained multiconvex optimization problem~(\ref{prob:biconvex2}), to which the block coordinate descent method is applied. We also presented theoretical analyses of the proposed method. First, we showed that each subproblem that appears in Algorithm~\ref{alg:node-based} can be solved analytically. In addition, we showed that any accumulation point $(U^*,V^*)$ of the sequence $\{(U^{(p)},V^{(p)})\}_{p=1}^\infty$ generated by the proposed algorithm is a stationary point of the objective function of problem~(\ref{prob:biconvex2}), and we gave a range of $\gamma$ such that $(U^*,V^*)$ satisfies $U^*=V^*$.
We also pointed out the relationship between the objective function of problem~(\ref{prob:biconvex2}) and the augmented Lagrangian.
Numerical experiments showed that our method does inherit the rank constraint and that it can estimate sensor positions faster than other methods without sacrificing the estimation accuracy, especially when the measured distances contain errors, and without running out of memory.

The present study suggests three directions for future work. First, Algorithm~\ref{alg:node-based} uses a cycle rule in which the $2m$ subproblems are solved in the order of $\bm{u}_1,\dots,\bm{u}_m,\bm{v}_1,\dots,\bm{v}_m$. However, in the general coordinate descent method, there are other update rules such as a random rule and a greedy rule \cite{Luo1992,Wright2015}. In SNL, the strategy of updating from variables corresponding to sensors that are connected directly to anchors is also expected to improve the estimation accuracy and time. Therefore, there is still room to consider how the order of solving the $2m$ subproblems affects the estimation time and accuracy. Second, we performed the minimization sequentially with respect to each column of $U$ and $V$ for computational efficiency, but updating some columns of $U$ and $V$ together is also possible, and the manner of block division in applying the block coordinate descent method should be examined further. Finally, the proposed method could be extended to general quadratic SDP problems with a rank constraint.

\section{Acknowledgment}
We thank Dr.\ Xiaokai Chang for providing the NLP-FD MATLAB code. This is a preprint of an article published in Optimization Letters. The final authenticated version is available online at:\\
https://doi.org/10.1007/s11590-021-01762-9.

\section{Funding}
This work was supported by JSPS KAKENHI Grant Number JP20H02385.

\section{Conflict of interest}
The authors declare that they have no conflict of interest.

\end{document}